\numberwithin{equation}{section}
\theoremstyle{plain} 
\newtheorem{thm}{Theorem}[section]
\newtheorem{lem}[thm]{Lemma}
\newtheorem{pro}[thm]{Proposition}
\theoremstyle{remark}
\newtheorem{rem}[thm]{Remark}
\renewcommand{\Re}{\mathrm{Re}\,}
\renewcommand{\Im}{\mathrm{Im}\,}
\newcommand{\E}{{\mathbb E }}
\newcommand{\N}{{\mathbb N}}
\newcommand{\ii}{\mathrm{i}}
\newcommand{\dd}{\mathrm{d}}
\newcommand{\bs}{\boldsymbol}
\newcommand{\nc}{\normalcolor}
\renewcommand{\mathbf}[1]{\bs{#1}}
\begin{document}

 \begin{minipage}{0.85\textwidth}
 \vspace{2.5cm}
 \end{minipage}
\begin{center}
\large\bf
Numerical Radius of Non-Hermitian Random Matrices
\end{center}

\renewcommand{\thefootnote}{\fnsymbol{footnote}}	
\vspace{1cm}
\begin{center}
\begin{minipage}{0.3\textwidth}
\begin{center}
Zhigang Bao\footnotemark[1]  \\
\footnotesize {The University of Hong Kong}\\
{\it zgbao@hku.hk}
\end{center}
\end{minipage}
\hspace{3ex}\begin{minipage}{0.3\textwidth}
 \begin{center}
Giorgio Cipolloni\footnotemark[2]\\
\footnotesize 
{University of Rome Tor Vergata}\\
{\it cipolloni@axp.mat.uniroma2.it}
\end{center}
\end{minipage}
\footnotetext[1]{Partially supported by Hong Kong RGC Grant GRF 16304724, NSFC12222121}
\footnotetext[2]{Partially supported by the MUR Excellence Department Project MatMod@TOV awarded to the Department of Mathematics, University of Rome Tor Vergata, CUP E83C18000100006.}

\renewcommand{\thefootnote}{\fnsymbol{footnote}}	

\end{center}
\vspace{1cm}

\begin{center}
 \begin{minipage}{0.8\textwidth}\footnotesize{ {\bf Abstract}: For a square matrix, the range of its Rayleigh quotients is known as the numerical range, which is a compact and convex set by the Toeplitz-Hausdorff theorem. The largest value and the smallest boundary value (in magnitude) of this convex set are known as the numerical radius and inner numerical radius respectively. The numerical radius is often used to study the convergence rate of iterative methods for solving linear systems. In this work, we investigate these radii for complex non-Hermitian random matrix and its elliptic variants. For the former, remarkably, these radii can be represented as extrema of a  stationary Airy-like process, which undergoes a correlation-decorrelation transition from small to large time scale. Based on this transition, we obtain the precise first and second order terms of the numerical radii. In the elliptic case, we prove that the fluctuation of the numerical radii boils down to the maximum or minimum of two independent Tracy-Widom variables. }
\end{minipage}
\end{center}

 \vspace{2mm}
 
 {\small
\footnotesize{\noindent\textit{Date}: \today}\\
 \footnotesize{\noindent\textit{Keywords}:} numerical range, numerical radius, extrema of Airy process, decorrelation transition
 
 \footnotesize{\noindent\textit{2020 Mathematics Subject Classification}}: 60B20, 60G55, 82C10.
 \vspace{2mm}

 }

\thispagestyle{headings}

\section{Introduction}

The {\it numerical range} of a square matrix $B\in \mathbb{C}^{N\times N}$ is defined as the set of its  Rayleigh quotients
\begin{align*}
R(B)=\big\{x^*B x: x\in \mathbb{C}^N, \|x\|_2=1\big\},
\end{align*}
which is also often called the {\it field of values}. 
By the Toeplitz-Hausdorff theorem, it is known that $R(B)$ is compact and convex. 
The {\it numerical radius} of $B$ is defined as 
\begin{align*}
r_+(B)= \max\{|z|: z\in R(B)\}
\end{align*}
and the {\it inner numerical radius} of $B$ is defined as 
\begin{align*}
r_-(B)=\min \{|z|: z\in \partial R(B)\}. 
\end{align*}
Let $\rho(B)$ be the spectral radius of $B$. From the definition, one can easily see that for any square matrix $B$,
\begin{align*}
\rho(B)\leq r_+(B),
\end{align*}
as the former can be obtained by choosing $x$ in $x^*Bx$ to be the unit eigenvector associated with the largest absolute eigenvalue. Another two fundamental inequalities regarding $r_+(B)$ for any square matrix $B$ are 
\begin{align*}
\frac{1}{2}\|B\|_{\text{op}}\leq r_+(B)\leq \|B\|_{\text{op}}, \qquad 
\end{align*}
and 
\begin{align*}
r_+(B^m)\leq \big(r_+(B)\big)^m, \qquad \forall m\in \mathbb{N}. 
\end{align*}
Consequently, one has the following inequality for any $m\in \mathbb{N}$,
\begin{align*}
\|B^m\|_{\text{op}}^{1/m}\leq 2^{1/m} r_+(B). 
\end{align*} 
In contrast, the Gelfand's formula states
\begin{align*}
\lim_{m\to \infty} \|B^m\|_{\text{op}}^{1/m}=\rho(B). 
\end{align*}
The above relation holds only in the limiting sense, i.e., as $m\to \infty$. In particular, neither $\rho(B)$ nor a simple multiple of it can be used to bound $\|B^m\|_{\text{op}}^{1/m}$ for a fixed $m$. Since, in practice, any iterative algorithm can only be run for a finite number of steps, the numerical radius, in contrast to the spectral radius, is frequently employed as a more reliable indicator of the convergence rate of iterative methods for solving linear systems; see, for instance, \cite{ALP94, E93, GT82, H94, LW64, T25}. The inner numerical radius, although less studied, also has applications in generalized eigenproblems \cite{CH99, HTV02}. For a detailed introduction to the numerical range and numerical radius, we refer to the monograph \cite{GR97} and to the entire Chapter 1 of \cite{HJ94}.

Over the past few decades, the field of random matrix theory has seen tremendous progress.   A prominent model is the complex non-Hermitian random matrix $A=(a_{ij})_{1\le i,j\le N}$ with independent entries  satisfying 
\begin{align}
&\mathbb{E}a_{ij}=0, \qquad \mathbb{E}|a_{ij}|^2=\frac{1}{N}, \qquad \mathbb{E} a_{ij}^2=0,\notag\\
& \mathbb{E}|\sqrt{N}a_{ij}|^k<C_k<\infty, \quad \text{for any given } k. \label{082701}
\end{align}
A fundamental result for the matrix $A$ is the circular law, which states that the empirical distribution of its eigenvalues converges to the uniform distribution on the unit disc in the complex plane \cite{ Bai97, Girko84, PZ10, GT10, TV10b}. Regarding the local spectral statistics, the spectral radius and the operator norm of non-Hermitian random matrices have also been very well understood. 
It is now well-known that $\rho(A)$ converges to $1$ and $\|A\|_{\text{op}}$ converges to $2$ in probability, or even almost surely, under suitable moment conditions \cite{BSY88, BY93, BY86, BCG22}. In addition to the first-order limits, the finer fluctuation behavior of $\rho(A)$ and $\|A\|_{\text{op}}$ has also been thoroughly studied. In particular, it is now known from \cite{CEX23} that $\rho(A)$ follows a Gumbel law under the general assumption (\ref{082701}), a universal result that aligns with the Gaussian case \cite{Rider03}. More specifically, one has
\begin{align*}
\rho(A)=1+\sqrt{\frac{\gamma_N}{4N}}+\frac{\mathcal{G}_N}{\sqrt{4N\gamma_N}}, \qquad \gamma_N:=\log N-2\log\log N-\log 2\pi
\end{align*}
where $\mathcal{G}_N$ is asymptotically a standard Gumbel random variable.  Regarding $\|A\|_{\text{op}}$, we know that it follows the Tracy-Widom distribution from 
\cite{Joh00,Sosh02,Peche,PY} etc. More specifically, one has 
\begin{align*}
\|A\|_{\text{op}}=2+(2N)^{-2/3} \mathcal{T}_N,
\end{align*}
where $\mathcal{T}_N$ is asymptotically a standard $\text{TW}_2$ random variable. 
We also refer to the survey \cite{BC12} and the monograph \cite{BF22} for more introduction about the spectral properties of $A$. 

In contrast to the extensive study of $\rho(A)$ and $\|A\|_{\text{op}}$, research on the numerical radius of $A$ or more general non-Hermitian random matrices is much scarcer. As far as we know, the only results available in the literature are that $r_{\pm}(A)$ converges to $\sqrt{2}$ almost surely as $N\to\infty$, which can be seen as a corollary of the convergence of the entire numerical range of $A$ in \cite{CGLZ14}. We also refer to \cite{CGT25} for a related discussion. In this paper, we aim to study the finer behavior of $r_{\pm}(A)$.

A remarkable fact is that for any square matrix $B$, these radii admit the following representations, which can be found from \cite[Page 41]{HJ94} and \cite{PT02} for instance,
\begin{align}
r_+(B)= \max_{\theta\in [0,2\pi)} \lambda_{1}(H(e^{\ii \theta}B)), \qquad r_-(B)=\Big|\min_{\theta\in [0,2\pi)}\lambda_{1}(H(e^{\ii \theta}B))\Big|. \label{090901}
\end{align}
Here $H(D)=(D+D^*)/2$ is the Hermitian part of a square matrix $D$, and $\lambda_1(H(D))$ is the largest eigenvalue of an Hermitian matrix $H(D)$.  We remark here that although the numerical radii are given by (\ref{090901}), the collection $\{(\lambda_{1}(H(e^{\ii \theta}B)),\theta):\theta\in[0,2\pi)\}$ itself is not necessarily $\partial R(B)$. Actually, $R(B)$ can be obtained as the intersection of a family of half-planes
\begin{align*}
R(B)= \bigcap_{\theta\in [0,2\pi)} \mathbb{H}_{\theta}, \qquad \mathbb{H}_{\theta}:=e^{-\ii \theta} \big\{z\in \mathbb{C}: \Re z\leq \lambda_{1}(H(e^{\ii \theta}B))\big\}. 
\end{align*}

When $B$ is a random matrix, $H(e^{\ii \theta}B)$ is a Hermitian random matrix for any fixed parameter $\theta\in [0,2\pi)$. It is now well-known that under rather general assumptions on a Hermitian random matrix, its largest eigenvalue often follows the Tracy-Widom law; see \cite{AEKS20} for instance. Consequently, for a large class of random matrices $B$, $\lambda_{1}(H(e^{\ii \theta}B))$ is a stochastic process with time domain $\theta\in [0,2\pi)$ and marginal distributions given asymptotically by the Tracy-Widom law. We may call such a process an {\it Airy-like process}. The study of $r_{\pm}(B)$ thus reduces to the study of the extrema of this process over $\theta\in [0,2\pi)$. 

In particular, for the non-Hermitian matrix $A$ defined in (\ref{082701}), it is straightforward to verify that for any $\theta\in [0,2\pi)$, $H(e^{\ii \theta} A)$ is a complex Wigner matrix. The Tracy-Widom law for such matrices is well established \cite{Fo93,TW94, Sa99,TV10,EYY12}. Given the stationarity in the moments and correlation structure of $H(e^{\ii \theta} A)$, we refer to $\lambda_1(H(e^{\ii \theta} A))$ as a {\it stationary Airy-like process}. This stationarity necessitates a delicate investigation of the process's correlation structure to study its extrema.

In contrast, if $\lambda_1(H(e^{\ii \theta} B))$ is non-stationary for some other random matrix model $B$, it might happen that the first-order limit of $\lambda_1(H(e^{\ii \theta} B))$ achieves its global extrema at only a few points, making the long-range correlation structure of the fluctuation term irrelevant for estimating the extrema. In such a case, the analysis can be largely localized. As a side result, we also consider the elliptic variants of $A$, which falls into such a non-stationary category. To this end, we further consider the complex Wigner matrix $W=(w_{ij})$ with independent (up to symmetry $w_{ij}=\overline{w_{ji}}$) entries satisfying
\begin{align*}
&\mathbb{E}w_{ij}=0, \qquad \mathbb{E}|w_{ij}|^2=\frac{1}{N}, \qquad \mathbb{E} w_{ij}^2=\delta_{ij},\notag\\
& \mathbb{E}|\sqrt{N}w_{ij}|^k<C_k<\infty, \quad \text{for any given } k.
\end{align*}
The elliptic matrix considered in this paper is defined as the following random matrix
\begin{align}
A^{\gamma}=\sqrt{\gamma} W+\sqrt{1-\gamma}A,\qquad \gamma\in (0,1].  \label{090801}
\end{align}
The limit of the empirical eigenvalue distribution is called the elliptic law  and has been well studied in \cite{Girko85,Na13, NR15, AK22} etc.

\subsection{Main Results}

We have the following theorems.

\begin{thm} \label{maintheorem} Let $A$ be the non-Hermitian random matrix defined in (\ref{082701}). We have the following expansion in probability,
\begin{align}
 &r_+(A)=\sqrt{2}+\frac{(\log N)^{2/3}}{4\sqrt{2}N^{2/3}}(1+o(1)), \label{r plus} \\
 &r_-(A)=\sqrt{2}-\frac{ (\log N)^{1/3}}{2^{1/6}N^{2/3}}(1+o(1)).  \label{r minus}
\end{align}
\end{thm}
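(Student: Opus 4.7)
The plan is to exploit the representation (\ref{090901}), which turns the theorem into an extreme-value statement for the Hermitian process $M(\theta):=H(e^{\ii\theta}A)$ as $\theta$ ranges over $[0,2\pi)$. Decomposing $A=S+\ii T$ with $S:=(A+A^*)/2$ and $T:=(A-A^*)/(2\ii)$ Hermitian, one obtains $M(\theta)=\cos\theta\,S-\sin\theta\,T$; a direct second-moment computation shows that $S$ and $T$ are two uncorrelated complex Hermitian Wigner matrices of entry variance $1/(2N)$ (jointly Gaussian and independent in the Gaussian model), and hence that $M(\theta)$ is itself, for every fixed $\theta$, a complex Wigner matrix of entry variance $1/(2N)$. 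In particular the marginal law of $\lambda_1(M(\theta))$ is centered at $\sqrt{2}$ with Tracy-Widom fluctuations at scale $(\sqrt{2}\,N^{2/3})^{-1}$, and $\theta\mapsto\lambda_1(M(\theta))$ is a stationary Airy-like process on $[0,2\pi)$.

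The key step is to identify and exploit the decorrelation scale of this process. First-order perturbation gives $\partial_\theta\lambda_1(M(\theta))=\langle v_1(\theta),M'(\theta)v_1(\theta)\rangle$ with $M'(\theta)=-\sin\theta\,S-\cos\theta\,T$. A short computation shows that $M'(\theta)$ is itself a complex Wigner matrix of variance $1/(2N)$ uncorrelated with $M(\theta)$, so the quadratic form above, tested against the delocalized top eigenvector $v_1(\theta)$, has typical size $N^{-1/2}$. Hence $\lambda_1(M(\theta+\delta))$ decorrelates from $\lambda_1(M(\theta))$ on the Tracy-Widom scale $N^{-2/3}$ precisely when $\delta\asymp N^{-1/6}$, giving an effective number $\asymp N^{1/6}$ of asymptotically independent samples. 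Combining this with the Tracy-Widom tail asymptotics $\P(\mathrm{TW}_2>x)\sim\exp(-\tfrac{4}{3}x^{3/2})$ and $\P(\mathrm{TW}_2<-x)\sim\exp(-\tfrac{1}{12}x^3)$, the matching equations $\tfrac{4}{3}x_+^{3/2}=\tfrac{1}{6}\log N$ and $\tfrac{1}{12}x_-^3=\tfrac{1}{6}\log N$ yield $x_+=(\log N)^{2/3}/4$ and $x_-=(2\log N)^{1/3}$, which after multiplication by $(\sqrt{2}\,N^{2/3})^{-1}$ reproduce exactly the constants $1/(4\sqrt{2})$ and $1/2^{1/6}$ of (\ref{r plus}), (\ref{r minus}).

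To make this rigorous, I would prove the upper bound on $r_+(A)-\sqrt{2}$ (and the analogous one on $\sqrt{2}-r_-(A)$) by introducing a fine grid $\Theta_0\subset[0,2\pi)$ of cardinality $N^{1/6+\varepsilon}$; a quantitative Lipschitz argument based on $\partial_\theta\lambda_1\sim N^{-1/2}$, together with eigenvector delocalization and edge rigidity, reduces the continuous supremum to the supremum over $\Theta_0$ up to an error $o(N^{-2/3})$ with high probability. A union bound on $\Theta_0$ combined with a sharp uniform upper-tail large-deviation estimate for $\lambda_1$ of a complex Wigner matrix in the regime $x\asymp(\log N)^{2/3}$ (available via moment/resolvent methods, \cf\ \cite{EYY12}) then yields the upper bound. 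The matching lower bound is obtained by choosing a sparse grid $\Theta_1\subset[0,2\pi)$ of cardinality $\asymp N^{1/6}$ with spacing $\gg N^{-1/6}$ and proving that the joint distribution of $(\lambda_1(M(\theta)))_{\theta\in\Theta_1}$ converges, at the moderate-deviation scales of interest, to a product of independent $\mathrm{TW}_2$ variables; a Poisson-type extreme-value computation then delivers the sharp lower bound for both $r_+$ and $r_-$.

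The main obstacle is establishing this asymptotic independence with enough quantitative precision to pin down the exact constants $1/(4\sqrt{2})$ and $1/2^{1/6}$ rather than only the correct orders in $\log N$ and $N$. This will require a quantitative Dyson-Brownian-motion coupling at the edge that decorrelates over effective times $\gg N^{-2/3}$, combined with edge rigidity and moderate-deviation tail estimates that are uniform in $\theta$ and sharp enough to distinguish $x_+=(\log N)^{2/3}/4$ from nearby values. Once this is carried out in the Gaussian case, universality with respect to the entry distribution (\ref{082701}) should follow by a comparatively standard Green function comparison in the spirit of the edge universality results of \cite{EYY12}.
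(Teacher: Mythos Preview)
Your proposal is correct and follows essentially the same route as the paper: both reduce to the stationary Airy-like process $\lambda_1(M(\theta))$, identify the correlation--decorrelation transition at scale $N^{-1/6}$, discretize accordingly, use a union bound together with sharp moderate-deviation tails for the upper bound, use asymptotic independence on a sparse grid for the lower bound, handle the Gaussian case first, and then transfer to general entries by a Green-function comparison. Two small points worth flagging: the sharp tail estimates $\P(N^{2/3}(\lambda_1-2)\ge x)\asymp e^{-\frac{4}{3}x^{3/2}}$ and $\P(N^{2/3}(\lambda_1-2)\le -x)\asymp e^{-\frac{1}{12}x^3}$ in the moderate-deviation regime $x\asymp(\log N)^{2/3}$ (resp.\ $(\log N)^{1/3}$) are not in \cite{EYY12} but come from more recent work (\cite{PZ17,BBBK,EX23,BCEHK25b}); and for the lower bound the paper implements your ``Poisson-type extreme-value computation'' concretely via a second-moment (Chebyshev) argument on $\sum_i\mathbf{1}(\mathcal{F}_i)$, which only needs two-point decorrelation rather than full product-form convergence.
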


\begin{rem} It is an appealing problem to identify the next-order terms of $r_{+}(A)$ and $r_{-}(A)$, up to and including the fluctuation terms. We refer to Section \ref{s.discussion} for further discussion. In Figure \ref{fig:NRiid}, we also present a simulation result for the numerical range of $A$, assuming the entries are complex Gaussian. As shown in \cite{CGLZ14}, the entire numerical range converges to $D(0,\sqrt{2})$, the disc with radius $\sqrt{2}$.
\end{rem}

\begin{rem} In \cite{CGLZ14}, in addition to the non-Hermitian random matrix $A$, the authors also studied the numerical range of the strictly upper triangular matrix $T$ with i.i.d. complex random variables above the diagonal. Note that $H(e^{\ii \theta} T)$ is again a complex Wigner matrix, but with zero diagonal entries. Such a difference is minor for our analysis. Hence, the approach developed in this paper can also be used to compute the first two orders of the numerical radii of $T$.
\end{rem}

\begin{figure}[htbp]
    \centering
    \includegraphics[width=0.8\textwidth]{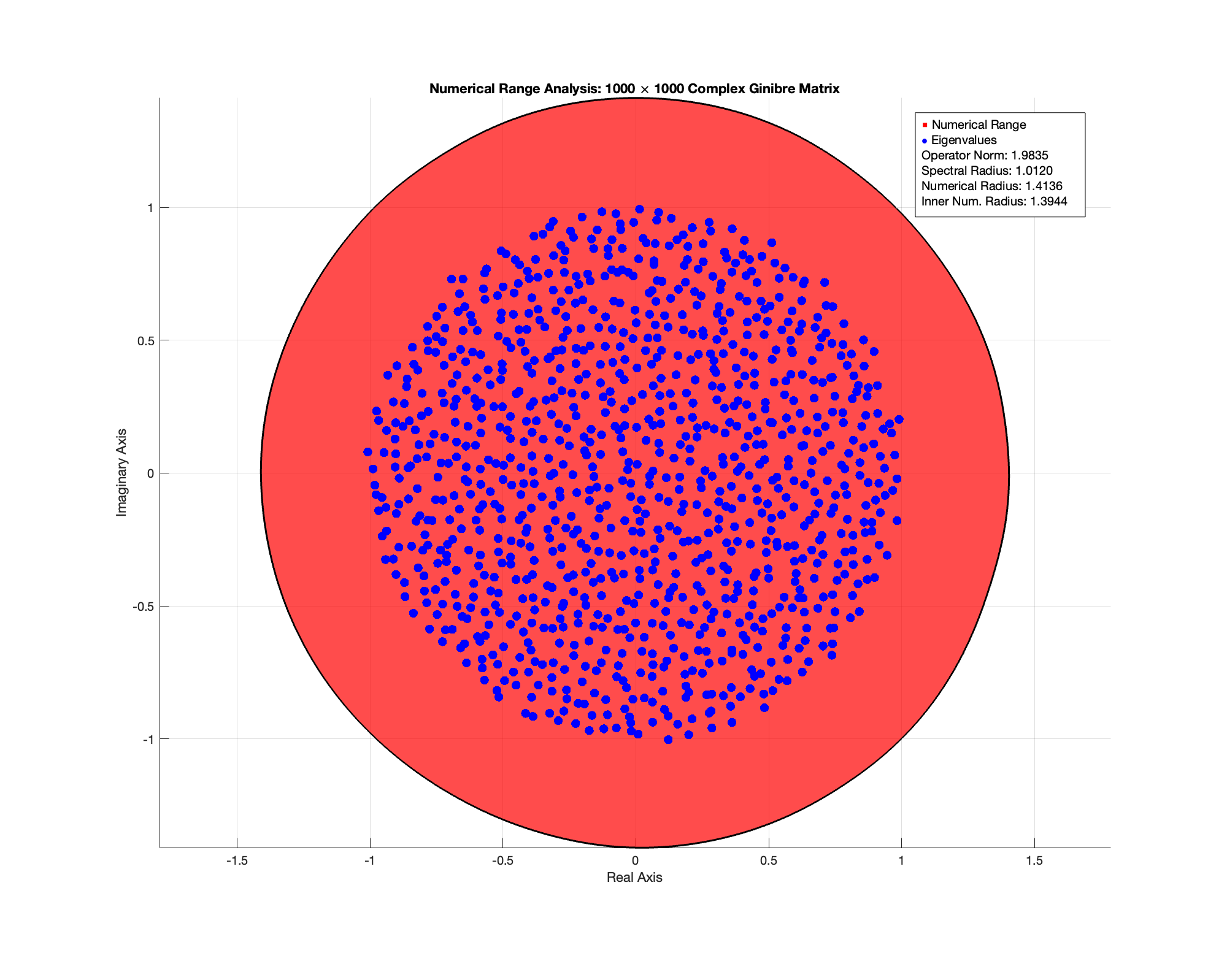}
    \caption{}
    \label{fig:NRiid}  
\end{figure}

For the elliptic model, the elliptic law in \cite{Girko85, Na13, NR15} shows that the limiting spectral distribution is supported on an ellipse. It is then natural to expect that the numerical range also has an elliptic shape. Consequently, the discussion of the  extrema of $\partial R(A^{\gamma})$ can be very much localized. Specifically, we have the following theorem regarding $r_+(A^{\gamma})$ and $r_-(A^\gamma)$, which differ at the first order.

\begin{thm}\label{thm.elliptic} Let $A^{{\gamma}}$ be the elliptic model defined in (\ref{090801}). Let $T_1, T_2$ be two independent $\text{TW}_2$ variables. 

(i) If ${\gamma}\in [N^{-1/3+\delta},1]$ for some small constant $\delta>0$, as $N\to \infty$, we have 
\begin{align*}
\frac{\sqrt{2}N^{2/3}}{\sqrt{1+{\gamma}}}\Big(r_+(A^{{\gamma}})-\sqrt{2(1+{\gamma})}\Big)\stackrel{d}\longrightarrow \max\{T_1, T_2\}. 
\end{align*}

(ii) If ${\gamma}\in [N^{-1/3+\delta},1-\delta']$ for some small constant $\delta,\delta'>0$, as $N\to \infty$, we have \begin{align*}
\frac{\sqrt{2}N^{2/3}}{\sqrt{1-{\gamma}}}\Big(r_-(A^{{\gamma}})-\sqrt{2(1-{\gamma})}\Big)\stackrel{d}\longrightarrow \min\{T_1, T_2\}. 
\end{align*}

\end{thm}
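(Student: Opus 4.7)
By representation (\ref{090901}) both radii reduce to the extrema of the process $\theta\mapsto\lambda_1(H(e^{\ii\theta}A^\gamma))$ over $\theta\in[0,2\pi)$. Using $H(W)=W$ (since $W$ is Hermitian) and $H(\ii W)=0$, one computes
\[
H(e^{\ii\theta}A^\gamma) = \sqrt{\gamma}\cos\theta\,W + \sqrt{1-\gamma}\bigl(\cos\theta\,H(A) + \sin\theta\,H(\ii A)\bigr),
\]
so that for each fixed $\theta$ it is a complex Wigner matrix with off-diagonal variance $\sigma^2(\theta)/N$, where $\sigma^2(\theta)=\gamma\cos^2\theta+(1-\gamma)/2$. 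The deterministic envelope $2\sigma(\theta)$ attains its maximum $\sqrt{2(1+\gamma)}$ exactly at $\theta\in\{0,\pi\}$ and its minimum $\sqrt{2(1-\gamma)}$ exactly at $\theta\in\{\pi/2,3\pi/2\}$, with quadratic approach of coefficient of order $\gamma$. In contrast to Theorem~\ref{maintheorem}, this non-stationarity is what permits a localized analysis.

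\textbf{Localization.} Set $\eta_N:=N^{-1/3+\delta/3}/\sqrt{\gamma}$; under $\gamma\ge N^{-1/3+\delta}$ one has $\eta_N\to 0$. Outside the $\eta_N$-arcs around $\{0,\pi\}$ the envelope satisfies $2\sigma(\theta)\leq\sqrt{2(1+\gamma)}-cN^{-2/3+2\delta/3}$. Combining this with a uniform-in-$\theta$ edge rigidity bound $|\lambda_1(H(e^{\ii\theta}A^\gamma))-2\sigma(\theta)|\leq N^{-2/3+\epsilon}$, derived from the complex Wigner local law of \cite{EYY12,AEKS20} on a polynomial $\epsilon$-net of $\theta$'s together with the Lipschitz continuity of $\lambda_1$ in $\theta$, forces the global maximum into $[-\eta_N,\eta_N]\cup[\pi-\eta_N,\pi+\eta_N]$. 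The analogous reduction places the global minimum for part~(ii) into $\eta_N$-arcs around $\{\pi/2,3\pi/2\}$; the restriction $\gamma\leq 1-\delta'$ keeps $\sigma^2(\pi/2)=(1-\gamma)/2$ bounded away from zero so that Tracy--Widom scaling remains non-degenerate.

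\textbf{Local analysis and conclusion.} In the window $|\theta|\leq\eta_N$, the Taylor expansion
\[
H(e^{\ii\theta}A^\gamma) = H(A^\gamma) + \theta\sqrt{1-\gamma}\,H(\ii A) - \tfrac{\theta^2}{2}H(A^\gamma) + O(\theta^3),
\]
first-order eigenvalue perturbation at the top eigenvector $v$ of $H(A^\gamma)$, and the isotropic local law bound $|\langle v,H(\ii A)v\rangle|\leq N^{-1/2+\epsilon}$ (exploiting that $H(\ii A)$ is essentially independent of the pair $(W,H(A))$) give
\[
\lambda_1(H(e^{\ii\theta}A^\gamma)) = \lambda_1(H(A^\gamma)) + \theta\sqrt{1-\gamma}\langle v,H(\ii A)v\rangle - \tfrac{\theta^2}{2}\lambda_1(H(A^\gamma)) + (\text{lower order}).
\]
Optimizing the right-hand side in $\theta$ produces a correction of $O(N^{-1+2\epsilon})=o(N^{-2/3})$, so $\max_{|\theta|\leq\eta_N}\lambda_1(H(e^{\ii\theta}A^\gamma))=\lambda_1(H(A^\gamma))+o(N^{-2/3})$. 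Since $H(e^{\ii\pi}A^\gamma)=-H(A^\gamma)$, the analogous bound near $\pi$ yields $-\lambda_N(H(A^\gamma))+o(N^{-2/3})$, and therefore
\[
r_+(A^\gamma) = \max\bigl\{\lambda_1(H(A^\gamma)),\,-\lambda_N(H(A^\gamma))\bigr\}+o(N^{-2/3}).
\]
Now $H(A^\gamma)$ is a complex Wigner matrix of off-diagonal variance $(1+\gamma)/(2N)$, so Wigner edge universality \cite{TV10,EYY12} and the asymptotic independence of the top and bottom GUE eigenvalues (classically known for GUE via determinantal formulas and inherited by universality) give the joint convergence of the centered and rescaled pair to two independent $\text{TW}_2$ variables $(T_1,T_2)$; the continuous mapping theorem applied to $\max$ then gives~(i). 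Part~(ii) is parallel: $H(e^{\ii\pi/2}A^\gamma)=\sqrt{1-\gamma}\,H(\ii A)$ is a complex Wigner matrix of variance $(1-\gamma)/(2N)$, and the same local analysis at $\{\pi/2,3\pi/2\}$ reduces $r_-(A^\gamma)$ to the minimum of the top and bottom eigenvalues of this single matrix, producing $\min\{T_1,T_2\}$ after the stated rescaling.

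\textbf{Main obstacle.} The delicate point is the uniform-in-$\theta$ control in the local analysis: the first-order $\theta$-derivative of $\lambda_1$ is a random quantity of order $N^{-1/2}$, and showing that its cumulative contribution on $|\theta|\leq\eta_N$ remains $o(N^{-2/3})$ is exactly what demands $\gamma\ge N^{-1/3+\delta}$. A rigorous implementation would require an isotropic local law or Green-function comparison uniform over the two-parameter family $(N,\theta)$ on this window, careful control of the cubic Taylor remainder at the matrix level (for which Weyl's inequality alone is insufficient when $\delta$ is small), and the joint Tracy--Widom universality at the two opposite edges of a complex Wigner matrix; each ingredient is by now standard but demands careful bookkeeping.
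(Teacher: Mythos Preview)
Your overall strategy matches the paper's: compute the $\theta$-dependent variance, localize via rigidity to $N^{-1/6-\varepsilon}$-arcs around $\{0,\pi\}$ (resp.\ $\{\pi/2,3\pi/2\}$), reduce to the values at exactly those two points, observe $H(e^{\ii\pi}A^\gamma)=-H(A^\gamma)$ so that the two relevant quantities are the top and bottom eigenvalues of a single complex Wigner matrix, and conclude via their asymptotic Tracy--Widom independence (which the paper, like you, obtains from the GUE case plus GFT).

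The substantive difference, and the place where your argument has a gap, is the local reduction. The paper does not expand $\lambda_1$ in $\theta$; instead it first \emph{rescales}, considering $\sqrt{(1+\gamma)/(1+\gamma\cos 2\theta)}\,\lambda_1^\gamma(\theta)$, which is the top eigenvalue of a Wigner matrix with $\theta$-independent variance, and then applies the correlation estimate of Proposition~\ref{lem.correlation} to this standardized process to get $|\tilde\lambda_1(\theta)-\tilde\lambda_1(0)|\le N^{-2/3-\tilde\varepsilon}$. Since the rescaling factor is $\ge 1$ near $\theta=0$, this immediately yields the needed upper bound $\lambda_1^\gamma(\theta)\le\lambda_1^\gamma(0)+N^{-2/3-\tilde\varepsilon}$. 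Your direct Taylor expansion, as written, does not achieve this: the displayed second-order term $-\tfrac{\theta^2}{2}\lambda_1(0)$ omits the eigenvector-variation contribution $\theta^2\sum_{k>1}|\langle v_k,H'(0)v\rangle|^2/(\lambda_1-\lambda_k)$, which is of the \emph{same} order over the window (not ``lower order''); and in any case the variational characterization shows your displayed formula is only a \emph{lower} bound for $\lambda_1(\theta)$, so optimizing it over $\theta$ does not give the upper bound on $\max_\theta\lambda_1(\theta)$ that you need. A correct direct argument would have to show the full second-order coefficient is negative with high probability (its leading part is $-\gamma\sqrt{2/(1+\gamma)}$), or, more cleanly, integrate $\langle v(s),\partial_sH^\gamma(s)\,v(s)\rangle$ after writing $\partial_sH^\gamma(s)=-\tan s\cdot H^\gamma(s)+\tfrac{\sqrt{1-\gamma}}{\cos s}H(\ii A)$ and controlling the second piece uniformly in $s$. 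The paper's rescaling trick sidesteps all of this bookkeeping.
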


\begin{rem} It is natural to expect that the limiting behavior of $r_{\pm}(A^{{\gamma}})$ is significantly different from that of $r_{\pm}(A)$ when ${\gamma}$ is sufficiently large. Unlike the case for $A$, where $\lambda_1(H(e^{\ii\theta}A))$ is stationary in $\theta$, the elliptic model $A^{{\gamma}}$ has $\lambda_1(H(e^{\ii\theta} A^{{\gamma}}))$ reaching its maximum near $\theta=0$ or $\pi$ and its minimum near $\theta=\frac{\pi}{2}$ or $\frac{3\pi}{2}$. Thus, these extrema are much more localized. When $t<N^{-1/3-\delta}$, we can no longer localize the discussion, and we expect a phase transition to arise when $t$ crosses the scale $N^{-1/3}$; see Section \ref{s.discussion} for further details. For $r_-(A^{\gamma})$, we restrict the discussion to $\gamma<1-\delta'$, because as $\gamma\to 1$, the matrix $A^\gamma$ approaches a Wigner matrix and $r_-(A^{\gamma})$ degenerates. We also refer to Figure \ref{fig:NRElliptic} for a simulation of the numerical range for the elliptic model with Gaussian entries and ${\gamma}=0.4$. Both the support of the empirical spectral distribution and the numerical range have elliptic shapes. 
\end{rem}

\begin{figure}[htbp]
    \centering
    \includegraphics[width=0.8\textwidth]{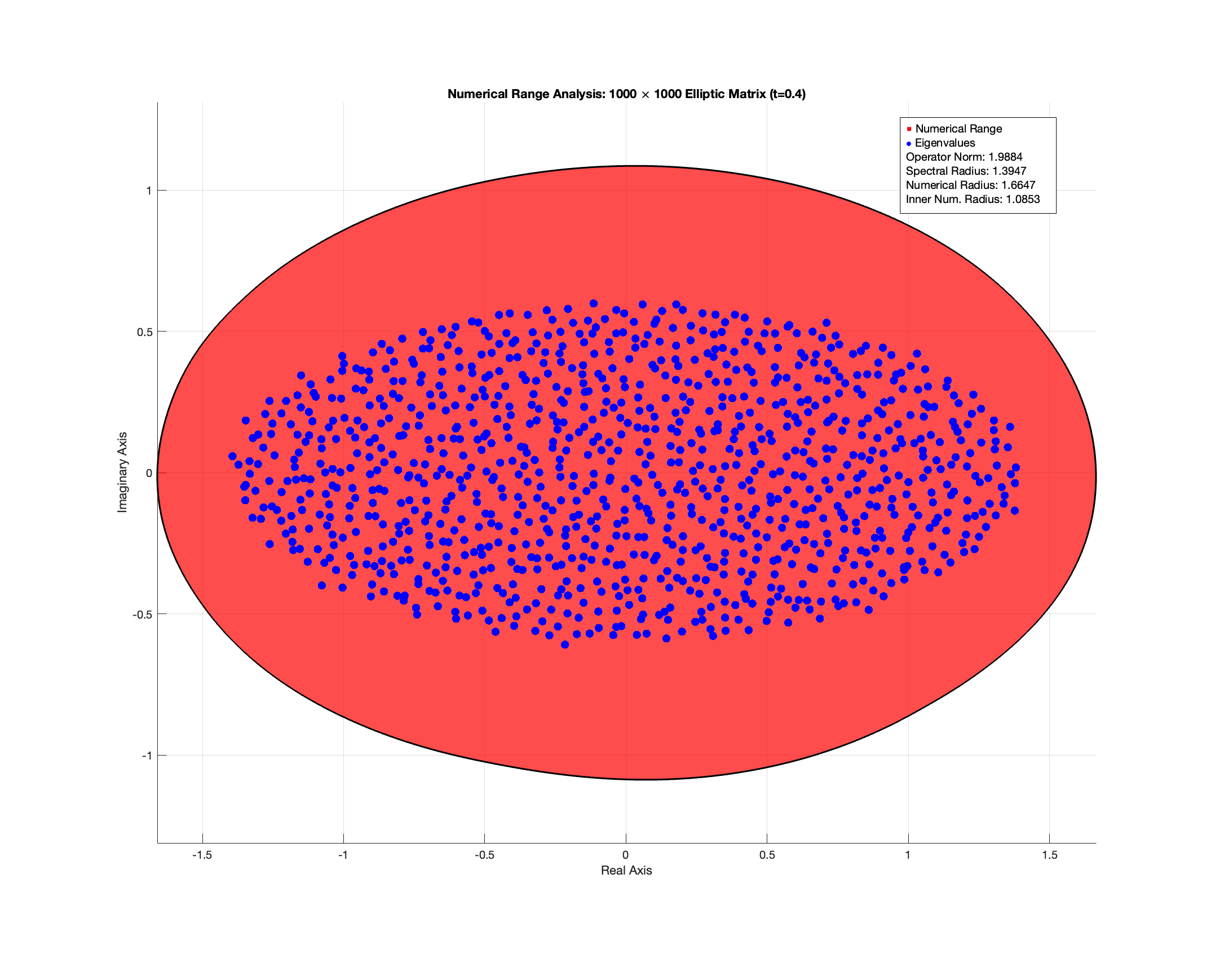}
    \caption{}
    \label{fig:NRElliptic}  
\end{figure}

\subsection{Proof  Strategy}   In this section, we briefly describe our proof strategy and further introduce some related references. 
We start with the basic fact 
\begin{align}
H(\theta)\equiv (h_{ij}(\theta)):=H(e^{\ii\theta} A)=\cos\theta \frac{A+A^*}{2}+\sin\theta \frac{ \ii (A-A^*)}{2}. \label{091102}
\end{align}
It is also elementary to check 
\begin{align}
& \mathbb{E}h_{ij}(\theta)=0, \qquad \mathbb{E}\big|h_{ij}(\theta)\big|^2=\frac{1}{2N}, \qquad \mathbb{E}\big(h_{ij}(\theta)\big)^2=\frac{1}{2N}\delta_{ij},\notag\\
& \mathbb{E}h_{ij}(\theta)\overline{h_{ij}(\theta')}=\frac{\cos(\theta-\theta')}{2N},\qquad \mathbb{E}h_{ij}(\theta)h_{ij}(\theta')=\frac{\cos(\theta-\theta')}{2N}\delta_{ij}.  \label{091101}
\end{align}
When $a_{ij}$'s are standard complex Gaussian, i.e., $A$ is a complex Ginibre matrix, $H(\theta)$ is a stationary matrix-valued Gaussian process with correlation given by (\ref{091101}). Notice that in the complex Gaussian case, $(A+A^*)/\sqrt{2}$ and $\ii (A-A^*)/\sqrt{2}$ are two independent GUE. The independence is generally not true for more generally distributed $A$, as in (\ref{090901}).

To study the extrema of $\lambda_1(H(\theta))$, the key step is to understand the correlation structure of this process. For the two-point correlation structure, we can embed the process $H(\theta)$ into a matrix-valued Ornstein-Uhlenbeck (OU) process. Consequently, the two-point correlation function of the eigenvalue process $\lambda_1(H(\theta))$ reduces to that of the largest eigenvalue of the Dyson Brownian motion (DBM) after an elementary transformation of the time domain. From the analysis of DBM, we consequently obtain a {\it correlation-decorrelation transition} for the process $\lambda_1(H(\theta))$. Roughly speaking, we can show that the fluctuations of $\lambda_1(H(\theta))$ and $\lambda_1(H(\theta'))$ are almost synchronized when $|\theta-\theta'|\ll N^{-1/6}$, while they become almost independent when $|\theta-\theta'|\gg N^{-1/6}$. Heuristically, the correlation estimate allows one to group the values of $\lambda_1(H(\theta))$ together as a single random variable for all $\theta$ in an interval of length smaller than $N^{-1/6}$. The problem of finding the extrema of the process is thus discretized to finding the extrema of a set of random variables, where the size of the set is slightly larger than $N^{1/6}$. The decorrelation estimate then allows us to treat this set as a family of random variables with short-range dependence. After a further sparsification of this set, we can obtain a subset of almost independent random variables. The extremal behavior of this subset turns out to determine the extrema of the entire set, and thus of the whole process, up to the second-order term. For generally distributed $A$, we perform a Green function comparison (GFT) to extend the result from the Gaussian case. 

It is worth mentioning that the correlation-decorrelation transition must be investigated in the small deviation regime of $\lambda_1(H(\theta))$, rather than at the typical fluctuation scale. This is because the extrema of a large number of (almost) independent random variables depends on the tail probability within a certain small deviation regime. For our problem, we need to consider the right-tail events $\{\lambda_1(H(\theta))-\sqrt{2}\geq x(\log N)^{2/3}/N^{2/3}\}$ for $r_+(A)$, and the left-tail events $\{\lambda_1(H(\theta))-\sqrt{2}\leq -x (\log N)^{1/3}/N^{2/3}\}$ for $r_{-}(A)$, where $x\in [K^{-1} ,K]$ is any positive constant. Probability estimates for such tail events have been recently derived in \cite{BCEHK25b}, based on results for the Gaussian ensemble in \cite{PZ17, BBBK} and the comparison approach developed in \cite{EX23}.

We remark that the approach used in this paper resembles the very recent work on the Law of the Fractional Logarithm (LFL) for Wigner matrices \cite{BCEHK25b} and also the previous result for the GUE case \cite{PZ17,BBBK2}. For the LFL, one considers the Wigner minor process, a sequence of Wigner matrices with increasing dimensions, which can be viewed as the growing upper-left corners of a doubly infinite array of independent (up to symmetry) random variables. In \cite{BCEHK25b,PZ17,BBBK2}, the running parameter is the dimension $N$. The key input for proving the LFL is a correlation-decorrelation transition of the largest eigenvalue of Wigner matrices of varying dimensions. More specifically, the transition occurs when $k \sim N^{2/3}$, where the two dimensions are $N$ and $N-k$, respectively. Again, the transition must be analyzed in the same small deviation regime as in our current work. Here, for $r_{\pm}(A)$, the running parameter is $\theta$, and the evolution of the matrix process is certainly different from the Wigner minor process. In general, the correlation-decorrelation transition is expected to hold universally when a random matrix is deformed to a certain extent, regardless of the specific form of the deformation.  We point out that while the general structure of the proof is similar to \cite{BCEHK25b}, there are substantial differences. In fact, we mostly focus on the analysis in the Gaussian case when we capitalize on an embedding of our process \eqref{091102} in the Dyson Brownian motion flow. Hence, in the Gaussian case our proof is rather elementary and is achieved without relying on heavy machineries nor on the integrability of the model.  In particular, no control on eigenvector overlaps as in \cite[Proposition 3.3]{BCEHK25b} is needed. We then use a GFT argument to conclude.

Finally, for the elliptic model, we can localize the analysis to two vicinities around $0$ and $\pi$ for $r_{+}(A^{\gamma})$, and to those around $\frac{\pi}{2}$ and $\frac{3\pi}{2}$ for $r_{-}(A^{\gamma})$. The correlation estimates, together with the asymptotic independence between the largest and smallest eigenvalues of a Wigner matrix, then directly lead to our results.

\subsection{Notations}

For positive quantities $f,g$ we write $f\lesssim g$ or $g\gtrsim f$ (or $f=O(g)$) and $f\sim g$ if $f \le C g$ or $c g\le f\le Cg$, respectively, for some constants $c,C>0$ which may depend on the $C_k$'s from \eqref{082701}.  We also use  $\|B\|_{\text{op}}$ to represent the operator norm of a square matrix $B$. 

We will use the concept of ``with very high probability'' \emph{(w.v.h.p.)} meaning that for any fixed $C>0$, the probability of an $N$-dependent event is bigger than $1-N^{-C}$ for $N\ge N_0(C)$. Moreover, even if not explicitly stated, every estimate involving $N$ is understood to hold for $N$ being sufficiently large. \nc  We also introduce the notion of \emph{stochastic domination} (see e.g.~\cite{EKYY13}): given two families of non-negative random variables
\[
X=\left(X^{(N)}(u) : N\in\N, u\in U^{(N)} \right) \quad \mathrm{and}\quad Y=\left(Y^{(N)}(u) : N\in\N, u\in U^{(N)} \right)
\] 
indexed by $N$ (and possibly some parameter $u$  in some parameter space $U^{(N)}$), 
we say that $X$ is stochastically dominated by $Y$, if for all $\xi, C>0$ we have 
\begin{equation}
	\label{stochdom}
	\sup_{u\in U^{(N)}} \mathbb{P}\left[X^{(N)}(u)>N^\xi  Y^{(N)}(u)\right]\le N^{-C}
\end{equation}
for large enough $N\ge N_0(\xi,C)$. In this case we use the notation $X\prec Y$ or $X= \mathcal{O}_\prec(Y)$.

\subsection{Outline} The rest of the paper will be organized as follows: In Section \ref{s.proof of main}, we prove our main result, Theorem \ref{maintheorem}, assuming the correlation-decorrelation transition. Sections \ref{s.correlation} and \ref{s.decorrelation} are then devoted to the proof of the correlation estimates and the decorrelation estimates in two regimes, respectively. In Section \ref{s.elliptic}, we prove the result for the elliptic model, i.e, Theorem \ref{thm.elliptic}. Finally, in Section \ref{s.discussion}, we discuss some further directions of study.

\section{Proof of Theorem~\ref{maintheorem}} \label{s.proof of main}
In this section, we prove Theorem \ref{maintheorem}, based on Propositions \ref{lem.correlation} and \ref{lem.decorrelation}, whose proofs  will be deferred. 

As the discussion is identical across the four quadrants, we will focus on the first quadrant $\theta\in[0, \frac{\pi}{2}]$ in the sequel. 
First, let $\varepsilon>0$ be an arbitrarily small constant, and denote
\begin{align*}
\delta_N^{\varepsilon}=N^{-\frac{1}{6}-\varepsilon},\qquad L_N^{\varepsilon}:=\big\lceil \frac{\pi}{2} N^{\frac{1}{6}-\varepsilon} \big\rceil,\qquad  K_N^{\varepsilon}:=\big\lceil \frac{\pi}{2} N^{\frac{1}{6}+\varepsilon}\big\rceil. 
\end{align*}
 We decompose $[0,\frac{\pi}{2}]$ into 
\begin{align*}
[0,\frac{\pi}{2}]=\bigcup_{i=1}^{K_N^{\varepsilon}} I_i, \qquad I_i=[(i-1)\delta_N^{\varepsilon},i\delta_N^{\varepsilon}], \qquad \forall 1\leq i\leq K_N^{\varepsilon}-1,
\end{align*}
and set $I_{K_N^{\varepsilon}}=[(K_N^{\varepsilon}-1)\delta_N^{\varepsilon}, \frac{\pi}{2}]$. 
We then further regroup these $I_i$'s into $N^{2\varepsilon}$ groups as 
\begin{align*}
\mathcal{I}_j=\bigcup_{a=0}^{L_N^{\varepsilon}} I_{j+aN^{2\varepsilon}},\qquad \forall 1\leq j\leq N^{2\varepsilon}, 
\end{align*}
where $I_i=\emptyset$ if $i>K_N^{\varepsilon}$.

For brevity, we further adopt the notation
\begin{align*}
\mathcal{H}(\theta)=\sqrt{2}H(\theta), \qquad   \lambda_1(\theta)=\sqrt{2}\lambda_{1}(H(\theta))=\lambda_1(\mathcal{H}(\theta)).
\end{align*}
We scaled the matrix (and thus its eigenvalues) by $\sqrt{2}$ to match the usual GUE scaling. This allows us to refer to existing results more easily.

 We shall first prove that for each $j$, and for any $\epsilon_1, \epsilon_2>0$, by choosing $\varepsilon$ sufficiently small
\begin{align}
\mathbb{P}\Big(\Big|\max_{\theta\in \mathcal{I}_j} \lambda_1(\theta)-2-\frac{(\log N)^{2/3}}{4N^{2/3}}\Big|\geq \epsilon_1 \frac{(\log N)^{2/3}}{N^{2/3}}\Big)\leq N^{-3\varepsilon},  \label{081101}
\end{align}
and 
\begin{align}
\mathbb{P}\Big(\Big|\min_{\theta\in \mathcal{I}_j} \lambda_1(\theta)-2+\frac{ 2^{1/3}(\log N)^{1/3}}{N^{2/3}}\Big|\geq \epsilon_2\frac{(\log N)^{1/3}}{N^{2/3}} \Big)\leq N^{-3\varepsilon}  \label{082810}
\end{align}
hold for sufficiently large $N$. 
We can then take a union over all $N^{2\varepsilon}$ indices $j$'s and conclude the proof.  

 The main strategy is to prove a correlation-decorrelation transition for the pair $(\lambda_1(\theta),\lambda_1(\theta'))$ when $|\theta-\theta'|$ goes across $N^{-1/6}$. More specifically, we have the following two lemmas. 

\begin{pro}[Correlation Regime]\label{lem.correlation} Denote by $\theta_i$ the middle point of $I_i$. For any constant $D>0$, we have $\tilde{\varepsilon}\equiv \tilde{\varepsilon}(\varepsilon, D)>0$, such that for any sufficiently large $N\geq N_0(D,\varepsilon)$, 
\begin{align}
&\mathbb{P} \Big(\Big|\max_{\theta\in I_i}\lambda_1(\theta)- \lambda_1(\theta_i)\Big|\geq N^{-\frac23-\tilde{\varepsilon}}\Big)\leq N^{-D},\notag\\
& \mathbb{P} \Big(\Big|\min_{\theta\in I_i}\lambda_1(\theta)- \lambda_1(\theta_i)\Big|\geq N^{-\frac23-\tilde{\varepsilon}}\Big)\leq N^{-D}.
\label{correlation}
\end{align}
The same conclusion is still true if we replace $\theta_i$ by any other point $\theta\in I_i$. 
\end{pro}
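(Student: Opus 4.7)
The two estimates in \eqref{correlation} are parallel, so I only discuss the first. The plan splits into a Gaussian analysis via a Dyson Brownian motion (DBM) embedding and an extension to general entry distributions by Green function comparison.

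\emph{Gaussian rotation identity.} First assume $A$ is complex Ginibre. Set $X=(A+A^{*})/\sqrt{2}$ and $Y=\ii(A-A^{*})/\sqrt{2}$; a one-line covariance computation shows that $X,Y$ are two independent GUE matrices (variance $1/N$ per entry) and $\mathcal{H}(\theta)=\cos\theta\cdot X+\sin\theta\cdot Y$. The key algebraic identity is the rotation
\begin{align*}
\mathcal{H}(\theta_{i}+s)=\cos s\cdot\mathcal{H}(\theta_{i})+\sin s\cdot\widetilde{\mathcal{H}}(\theta_{i}),\qquad \widetilde{\mathcal{H}}(\theta_{i}):=-\sin\theta_{i}\cdot X+\cos\theta_{i}\cdot Y,
\end{align*}
together with the vanishing covariance $\E\big[\mathcal{H}_{jk}(\theta_{i})\,\overline{\widetilde{\mathcal{H}}_{jk}(\theta_{i})}\big]=0$, which exhibits $\widetilde{\mathcal{H}}(\theta_{i})$ as a GUE matrix \emph{independent} of $\mathcal{H}(\theta_{i})$. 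This is exactly the stationary one-step update of a matrix-valued Ornstein--Uhlenbeck process, and after the time-change $t=\sin^{2}s\sim s^{2}\lesssim N^{-1/3-2\varepsilon}$ the eigenvalue $\lambda_{1}(\theta_{i}+s)$ is the top edge eigenvalue of a short-time DBM issued from the GUE matrix $\mathcal{H}(\theta_{i})$.

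\emph{Pointwise estimate by perturbation.} For each fixed $s$ with $|s|\le\delta_{N}^{\varepsilon}$, I would expand to second order by Hermitian perturbation theory. Using the orthogonality $u_{j}(\theta_{i})^{*}\mathcal{H}(\theta_{i})u_{1}(\theta_{i})=0$ for $j\ne 1$ yields
\begin{align*}
\lambda_{1}(\theta_{i}+s)-\lambda_{1}(\theta_{i})=(\cos s-1)\lambda_{1}(\theta_{i})+\sin s\cdot u_{1}^{*}\widetilde{\mathcal{H}}u_{1}+\sin^{2}s\sum_{j\ne 1}\frac{|u_{j}^{*}\widetilde{\mathcal{H}}u_{1}|^{2}}{\lambda_{1}-\lambda_{j}}+R,
\end{align*}
with $R$ of higher order. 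Conditionally on $\mathcal{H}(\theta_{i})$ the linear piece is Gaussian with variance $\sin^{2}s/N$ and hence $\mathcal{O}_{\prec}(N^{-2/3-\varepsilon})$; the deterministic shift $(\cos s-1)\lambda_{1}\approx -s^{2}$ is balanced, at leading order, by the conditional mean of the quadratic piece, $\sin^{2}s\cdot\tfrac{1}{N}\sum_{j\ne 1}(\lambda_{1}-\lambda_{j})^{-1}\approx s^{2}(-m_{sc}(\lambda_{1}))\approx s^{2}$. What survives is the chi-square fluctuation of the quadratic piece, which by a Hanson--Wright bound together with edge rigidity $\lambda_{1}-\lambda_{j}\gtrsim(j-1)^{2/3}N^{-2/3}$ is $\mathcal{O}_{\prec}(s^{2}N^{-1/3})=\mathcal{O}_{\prec}(N^{-2/3-2\varepsilon})$; $R$ is controlled analogously by the same rigidity and independence inputs. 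This produces the pointwise bound $|\lambda_{1}(\theta_{i}+s)-\lambda_{1}(\theta_{i})|\prec N^{-2/3-\tilde{\varepsilon}}$ for some $\tilde{\varepsilon}(\varepsilon)>0$.

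\emph{Uniformization, non-Gaussian case, and main obstacle.} To upgrade to a supremum over $\theta\in I_{i}$, I would place a deterministic $N^{-2}$-mesh on $I_{i}$, apply the pointwise bound at each mesh point by a union bound (the polynomial loss is absorbed by the w.v.h.p.\ quality of the pointwise estimate), and fill in the gaps via the crude Weyl estimate $|\lambda_{1}(\theta)-\lambda_{1}(\theta')|\le(\|X\|_{\mathrm{op}}+\|Y\|_{\mathrm{op}})|\theta-\theta'|\le 6|\theta-\theta'|$, which at spacing $\sim N^{-2-1/6-\varepsilon}$ is negligible on the target scale. For generally distributed $A$ satisfying \eqref{082701} the rotation identity no longer produces an independent GUE factor, and I would close by a four-moment-matching Green function comparison applied to a smoothed test function of the joint Stieltjes transforms of $\mathcal{H}(\theta_{i})$ and $\mathcal{H}(\theta_{i}+s)$, transferring the Gaussian bound to the general case. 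The principal obstacle is that the target accuracy $N^{-2/3-\tilde{\varepsilon}}$ lies strictly below the typical fluctuation scale $N^{-2/3}$ of $\lambda_{1}(\theta_{i})$ itself: this forces the analysis into the small-deviation regime and genuinely relies on the exact cancellation between the OU drift $(\cos s-1)\lambda_{1}$ and the conditional mean of the second-order chi-square term, a cancellation predicated on the stationarity of $\mathcal{H}(\theta)$ and on $\lambda_{1}(\theta_{i})$ lying near the GUE equilibrium edge.
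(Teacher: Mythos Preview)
Your overall scheme---Gaussian case via the OU/rotation identity, then GFT---matches the paper, and you correctly isolate the key cancellation: $(\cos s-1)\lambda_1\approx -s^2$ against the conditional mean $\sin^2s\cdot\tfrac1N\sum_{j\ne1}(\lambda_1-\lambda_j)^{-1}\approx s^2$. In the paper's DBM language this is exactly the statement that the drift $\tfrac1N\sum_{\ell\ge2}(\mu_1-\mu_\ell)^{-1}-\tfrac{\mu_1}{2}$ is $O(N^{-1/3+\tilde\delta})$ at equilibrium. The mesh-plus-Lipschitz uniformization over $I_i$ is also what the paper does.

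The genuine gap is in the control of $R$, and already in the second-order fluctuation, on the small-gap event. The perturbation $\sin s\cdot\widetilde{\mathcal H}$ has operator norm $\sim N^{-1/6-\varepsilon}$, far larger than $\lambda_1-\lambda_2$, which is only $\sim N^{-2/3}$ \emph{typically} and admits no w.v.h.p.\ lower bound of the form $N^{-2/3-o(1)}$. Thus the Rayleigh--Schr\"odinger series is not norm-convergent, and your Hanson--Wright step implicitly needs $\sum_{j\ne1}(\lambda_1-\lambda_j)^{-2}\prec N^{4/3}$, which fails on the event $\{\lambda_1-\lambda_2\ll N^{-2/3}\}$ of only polynomially (not $N^{-D}$) small probability; ``$R$ is controlled analogously'' does not follow. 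The paper bypasses this in two independent ways. Its first proof uses the \emph{exact} Hellmann--Feynman identity
\[
\lambda_1(\theta)-\lambda_1(\theta')=\tfrac{1}{\sqrt2}\int_{\theta'}^{\theta}\big\langle\mathbf w_1^\tau,(e^{\ii\tau}A-e^{-\ii\tau}A^*)\mathbf w_1^\tau\big\rangle\,\dd\tau,
\]
observes that in the Gaussian case $\partial_\tau\mathcal H(\tau)$ is independent of $\mathcal H(\tau)$ (this is your rotation identity), and bounds the quadratic form by $N^{-1/2+\xi}$ via eigenstate thermalization \cite{CES21}; no expansion, no remainder, and no eigenvalue gap enters. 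Its second proof integrates the DBM SDE directly and uses the sign observation that $\sum_{\ell=2}^{N^{\tilde\delta}}(\mu_1-\mu_\ell)^{-1}>0$ can be \emph{dropped} to yield a one-sided lower bound on $\mu_1(t_0)-\mu_1(0)$; swapping the roles of $\theta,\tilde\theta$ in the embedding gives the matching inequality. This positivity trick is precisely what replaces any lower bound on $\lambda_1-\lambda_2$. A minor further point: a ``four-moment-matching'' GFT is unavailable since only two moments of $a_{ij}$ are fixed in \eqref{082701}; the paper instead runs a long-time OU flow and uses a self-improving (Gronwall) GFT in the style of \cite{EX23} (Lemma~\ref{lem:GFT}) to absorb the nonzero third cumulant.
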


\begin{rem} The above lemma states that within each $I_i$, all the $\lambda_1(\theta)$ can be viewed as the same, up to a negligible error. Hence, all the $\lambda_1(\theta)$ are strongly correlated within an $I_i$. 
\end{rem}

\begin{pro}[Decorrelation Regime]\label{lem.decorrelation}  Let $\theta, \theta'\in [0,2\pi)$ such that 
\begin{align*}
N^{-\frac{1}{6}+\varepsilon}\leq \theta'-\theta \leq \frac{\pi}{2}. 
\end{align*}
Define the tail events
\begin{align*}
&\mathcal{F}_{\theta}(x)=\Big\{N^{2/3}(\lambda_1(\theta)-2)\geq x (\log N)^{2/3}\Big\}, \notag\\
& \mathcal{E}_{\theta}(x)=\Big\{N^{2/3}(\lambda_1(\theta)-2)\leq -x (\log N)^{1/3}\Big\}.
\end{align*}
Fix a large constant $K>0$. Then there exists a constant $\delta>0$,  such that 
\begin{align*}
\mathbb{P}\Big(\mathcal{F}_{\theta}(x)\cap \mathcal{F}_{\theta'}(x')\Big)=\mathbb{P}\big(\mathcal{F}_{\theta}(x)\big)\mathbb{P}\big(\mathcal{F}_{\theta'}(x')\big)\big(1+O(N^{-\delta})\big)
\end{align*}
and 
\begin{align*}
\mathbb{P}\Big(\mathcal{E}_{\theta}(x)\cap \mathcal{E}_{\theta'}(x')\Big)=\mathbb{P}\big(\mathcal{E}_{\theta}(x)\big)\mathbb{P}\big(\mathcal{E}_{\theta'}(x')\big)\big(1+O(N^{-\delta})\big),
\end{align*}
uniformly for $K^{-1}\leq x, x'\leq K$, $\ell=1,2$. 
Further, when $\theta'-\theta\geq N^{-\frac{1}{12}}$ (say), $\delta$ is independent of $\varepsilon$. 
\end{pro}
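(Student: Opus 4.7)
The plan is to first reduce to the complex Ginibre case by a joint Green function comparison of the pair $(\mathcal{H}(\theta), \mathcal{H}(\theta'))$. The joint moments of these two correlated Wigner matrices, read off from \eqref{091101}, agree with those of the Ginibre-induced pair through the first four orders up to negligible errors, so a standard Lindeberg swap applied to a smoothed indicator of the joint tail events---combined with the small-deviation tail bounds of \cite{BCEHK25b, EX23} to absorb the polynomial prefactor---transfers both multiplicative decorrelation identities from the Ginibre case to general $A$.

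In the Ginibre case, write $G_1 := (A+A^*)/\sqrt{2}$ and $G_2 := \ii(A-A^*)/\sqrt{2}$; a direct variance computation shows these are two independent GUE matrices, with $\mathcal{H}(\theta) = \cos\theta\, G_1 + \sin\theta\, G_2$. Setting $t := \theta'-\theta$ and $\tilde G := \mathcal{H}(\theta+\pi/2)$, the identity
\begin{equation}\label{OUembed}
\mathcal{H}(\theta') = \cos(t)\, \mathcal{H}(\theta) + \sin(t)\, \tilde G
\end{equation}
holds with $\tilde G$ a GUE independent of $\mathcal{H}(\theta)$ (the two matrices are orthogonal rotations of the independent pair $(G_1,G_2)$). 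Writing $\cos t = e^{-s/2}$ gives $s \asymp t^2$, and \eqref{OUembed} identifies the conditional law of $\mathcal{H}(\theta')$ given $\mathcal{H}(\theta)$ as one step of the matrix Ornstein--Uhlenbeck flow run for time $s$; the eigenvalue process is a Dyson Brownian motion at effective time $s$. The hypothesis $t \ge N^{-1/6+\varepsilon}$ gives $s \ge N^{-1/3+2\varepsilon}$, comfortably above the edge relaxation time $N^{-1/3}$.

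Given \eqref{OUembed}, I would estimate $\P(\mathcal{F}_\theta(x) \cap \mathcal{F}_{\theta'}(x'))$ by conditioning on $\mathcal{H}(\theta)$. On the event $\mathcal{F}_\theta(x)$, edge rigidity places the bulk spectrum of $\mathcal{H}(\theta)$ near the semicircle quantiles, and the small-deviation estimate of \cite{BCEHK25b} pins the top eigenvalue inside the window $2 + x(\log N)^{2/3}/N^{2/3} + O(N^{-2/3-\eta})$ for some $\eta>0$. I would then invoke a quantitative edge universality for the OU flow in the small-deviation regime: for any starting matrix satisfying this rigidity, after time $s \gg N^{-1/3}$ the right-tail probability $\P(N^{2/3}(\lambda_1(\mathcal{H}_s)-2) \ge y(\log N)^{2/3})$ for $y \in [K^{-1},K]$ matches the stationary (GUE/Tracy--Widom) value with a \emph{multiplicative} error of order $(sN^{1/3})^{-\delta_0}$, uniformly in the starting data. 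Integrating yields $\P(\mathcal{F}_{\theta'}(x') \mid \mathcal{H}(\theta)) = \P(\mathcal{F}_{\theta'}(x'))(1+O(N^{-\delta}))$ on $\mathcal{F}_\theta(x)$, and hence the first identity. The left-tail case is identical using the corresponding small-deviation bound for the lower tail of $\lambda_1$. When $t \ge N^{-1/12}$, the ratio $sN^{1/3} \ge N^{1/6}$ provides a uniform margin, so $\delta$ becomes independent of $\varepsilon$.

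The main obstacle will be upgrading the DBM/OU edge relaxation to the \emph{multiplicative} form required here, rather than the additive form produced by standard edge universality. Because $\P(\mathcal{F}_{\theta'}(x')) \asymp N^{-\frac{4}{3}(x')^{3/2}}$ and $\P(\mathcal{E}_{\theta'}(x')) \asymp N^{-(x')^3/12}$ are only polynomially small for $x' \in [K^{-1},K]$, a convergence-in-distribution statement or an additive $N^{-\delta}$ error does not suffice: the conditional and unconditional tails must agree to within a fractional power of the tail itself. I would obtain this via a characteristic-function/smooth-indicator coupling along the OU flow, mirroring the step in \cite{BCEHK25b} for the Wigner minor process. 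The stationarity of the flow (the GUE is its invariant measure) and a finite-speed-of-propagation bound at the spectral edge jointly ensure that the contribution of the atypical top eigenvalue on the event $\mathcal{F}_\theta(x)$ is absorbed into a prefactor $1+O(N^{-\delta})$ within the relaxation window $s \asymp t^2$; this is where the analysis is genuinely more delicate than the mere marginal edge universality, and it is the only point where the small-deviation tail bounds of \cite{BCEHK25b} enter the Gaussian part of the argument.
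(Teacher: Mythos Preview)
Your overall architecture (GFT reduction to Ginibre, then a DBM/OU analysis in the Gaussian case) matches the paper, and your embedding $\mathcal{H}(\theta')=\cos(t)\,\mathcal{H}(\theta)+\sin(t)\,\tilde G$ with $\tilde G$ independent of $\mathcal{H}(\theta)$ is correct. The Gaussian step, however, proceeds quite differently, and the difference is exactly where your self-identified obstacle sits.

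You condition on $\mathcal{H}(\theta)$ and aim to show that the conditional right tail of $\lambda_1(\theta')$ equals the unconditional one with multiplicative error $1+O(N^{-\delta})$ after OU time $s\sim t^2\gg N^{-1/3}$. This requires a \emph{multiplicative} edge-relaxation statement for DBM started from initial data that is atypical on $\mathcal{F}_\theta(x)$ (the top eigenvalue is inflated by $(\log N)^{2/3}N^{-2/3}$), and you leave this lemma open. The paper sidesteps conditioning entirely by a \emph{symmetric} decomposition: in law,
\[
\mathcal{H}(\theta_l)=\sqrt{\cos(\theta_1-\theta_2)}\,\mathcal{G}_1+\sqrt{1-\cos(\theta_1-\theta_2)}\,\mathcal{G}_{l+1},\qquad l=1,2,
\]
with $\mathcal{G}_1,\mathcal{G}_2,\mathcal{G}_3$ three independent GUE. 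Both $\mathcal{H}(\theta_l)$ are thus endpoints of two additive DBM flows with the \emph{same} initial condition $\sqrt{\cos(\theta_1-\theta_2)}\,\mathcal{G}_1$ but \emph{independent} Brownian drivers, run for time $t_1=1-\cos(\theta_1-\theta_2)\gtrsim N^{-1/3+2\varepsilon}$. Each flow is coupled, via Bourgade's short-time edge relaxation \cite[Theorem~2.8]{B21}, to a DBM $\nu^{(l)}$ with the same driver but an \emph{independent} GUE initial condition; the edge-eigenvalue coupling error is $N^{-2/3-\omega_1/10}$ with very high probability. Since $\nu^{(1)}(t_1)$ and $\nu^{(2)}(t_1)$ have independent drivers \emph{and} independent initial data, they are fully independent.

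The payoff is that the ``multiplicative accuracy'' you worry about comes for free: the coupling error $N^{-2/3-\omega_1/10}$ is below the fluctuation scale $N^{-2/3}$, so one simply shifts the threshold $x\mapsto x\pm N^{-\omega_1/10}(\log N)^{-2/3}$ and uses the one-point small-deviation bounds (Proposition~\ref{pro.tail}, cf.\ \cite[Lemma~3.4]{BCEHK25b}) to absorb the shift into a $1+O(N^{-\delta})$ factor. Your route is not wrong in principle, but the conditional-relaxation lemma you need is a genuine extra step; the paper's symmetric three-GUE decomposition is precisely the trick that dissolves it.
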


  We will further need  the following tail probability in the small deviation regime from \cite{BCEHK25b}. 
\begin{pro}\label{pro.tail} Fix any $K>0$. We have the following small deviation tail estimates.

\noindent(i)[Right tail] For any $\tau>0$ there exists a constant $C=C(K,\tau)$ such that for any $1\leq x\leq K(\log N)^{2/3}$, we have 
\begin{align}
C^{-1}\exp\Big(-\frac{4}{3}(1+\tau)x^{3/2}\Big)\leq \mathbb{P}\Big(N^{2/3}(\lambda_1(\theta)-2)\geq x\Big)\leq C\exp\Big(-\frac{4}{3}(1-\tau)x^{3/2}\Big)  \label{082811}
\end{align}
for any sufficiently large $N\geq N_0(K,\tau)$. 

\noindent(ii)[Left tail] For any $\tau>0$ there exists a constant $C=C(K,\tau)$ such that for any $1\leq x\leq K(\log N)^{1/3}$, we have 
\begin{align*}
C^{-1}\exp\Big(-\frac{1}{12}(1+\tau)x^{3}\Big)\leq \mathbb{P}\Big(N^{2/3}(\lambda_1(\theta)-2)\leq -x\Big)\leq C\exp\Big(-\frac{1}{12}(1-\tau)x^{3}\Big) 
\end{align*}
for any sufficiently large $N\geq N_0(K,\tau)$. 
\end{pro}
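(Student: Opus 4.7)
The plan is to observe that for each fixed $\theta$, Proposition~\ref{pro.tail} is really a statement about the top eigenvalue of a complex Wigner matrix, with no genuine dependence on $\theta$. Indeed, (\ref{091101}) shows that $\mathcal{H}(\theta)=\sqrt{2}H(\theta)$ has mean-zero entries with $\E|\mathcal{H}(\theta)_{ij}|^2=1/N$ for $i\ne j$, $\E(\mathcal{H}(\theta)_{ii})^2=1/N$, and all moments of $\sqrt N \mathcal{H}(\theta)_{ij}$ bounded; hence $\mathcal{H}(\theta)$ is a complex Wigner matrix with the standard GUE normalization for every $\theta$, and the marginal law of $\lambda_1(\theta)-2$ is that of the top eigenvalue of such a matrix, shifted to have edge $2$. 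The job therefore reduces to establishing right- and left-tail small-deviation bounds with rates $\tfrac{4}{3}x^{3/2}$ and $\tfrac{1}{12}x^3$ for the top eigenvalue of a complex Wigner matrix, uniformly in the ranges $1\le x\le K(\log N)^{2/3}$ and $1\le x\le K(\log N)^{1/3}$ respectively.

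I would carry out the argument in two steps. First, I would settle the Gaussian (GUE) case. Here $\lambda_1$ admits an explicit Fredholm/Airy-kernel description, and the finite-$N$ right tail satisfies $\P(N^{2/3}(\lambda_1-2)\ge x)=\exp(-\tfrac{4}{3}x^{3/2}(1+o(1)))$ uniformly in $x\in[1,K(\log N)^{2/3}]$, while the left tail satisfies $\P(N^{2/3}(\lambda_1-2)\le -x)=\exp(-\tfrac{1}{12}x^3(1+o(1)))$ uniformly in $x\in[1,K(\log N)^{1/3}]$. Both bounds follow from the known asymptotics of the TW$_2$ distribution extended into the moderate-deviation window: the right-tail rate comes from a saddle-point analysis of the Airy kernel (equivalently, from the Coulomb-gas energy for a single spike detaching from the bulk), while the left-tail rate with exponent $3$ reflects the quadratic-in-location cost of pushing the entire spectrum inward, classically due to the Selberg/Coulomb-gas computation. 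At this stage I would simply quote the explicit finite-$N$ right-tail estimates from \cite{PZ17} and the left-tail estimates from \cite{BBBK}.

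Second, I would transfer both tails to general complex Wigner matrices satisfying (\ref{082701}). Here I would invoke the comparison method of \cite{EX23}, extended and tailored to the small-deviation regime in \cite{BCEHK25b}. The idea is a Lindeberg-type swap of entries that matches four moments with a GUE reference, combined with a local law input, which shows that the ratio of tail probabilities at the same threshold between the generic Wigner matrix and GUE is of the form $e^{o(x^{3/2})}$ in the right-tail window (respectively $e^{o(x^{3})}$ in the left-tail window). Such a multiplicative error is exactly absorbed by the freedom to degrade the rate from $1$ to $1\pm\tau$ in (\ref{082811}) and its left-tail counterpart, yielding the stated bounds.

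The main obstacle is the second step: the threshold $x$ is allowed to grow logarithmically, so one is well outside the standard TW universality window, and a naive application of four-moment matching only yields universality for $x=O(1)$. Controlling the swap at the scale of the exponential rate requires precisely the refined small-deviation comparison developed in \cite{BCEHK25b}, which in turn relies on sharp optimal eigenvalue rigidity near the edge and on careful estimates of Green function derivatives against the tail probabilities themselves. Since this is exactly the content of that paper, the role here is to verify that the hypothesis of \cite{BCEHK25b} is met by $\mathcal{H}(\theta)$ for every $\theta$ (which is immediate from (\ref{091101}) and the moment assumption in (\ref{082701})) and to quote the conclusion.
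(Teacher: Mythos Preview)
Your proposal is correct and matches the paper's treatment: the paper does not prove Proposition~\ref{pro.tail} but cites it from the literature (see the remark following the statement), invoking \cite{PZ17} and \cite{BBBK} for the Gaussian case and \cite{EX23}, \cite{BCEHK25b} for the extension to general Wigner matrices, after noting---as you do---that $\mathcal{H}(\theta)$ is a standard complex Wigner matrix for each fixed $\theta$. One minor inaccuracy: the comparison in \cite{EX23,BCEHK25b} is not a four-moment Lindeberg swap but a continuous Ornstein--Uhlenbeck interpolation combined with a cumulant expansion (only the second moments are matched along the flow), though this does not affect the validity of your argument since you are ultimately quoting the conclusion of \cite{BCEHK25b}.
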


\begin{rem} The right tail estimate was  first established for GUE in \cite{PZ17}. Then, the upper bound on both tails were extended to general Wigner matrices in \cite{EX23}. Further, the above estimates were proved in \cite{BBBK} for both tails for Gaussian ensembles.  The extension of the results in \cite{BBBK} to general Wigner matrices was done in \cite{BCEHK25b}, using the approach in \cite{EX23}. 
\end{rem}

With the above three propositions, we can now state the proof of Theorem~\ref{maintheorem} below. 

\begin{proof}[Proof of Theorem~\ref{maintheorem}] 

We first prove (\ref{081101}) and (\ref{082810}).  As their  proofs  are similar, we will state the details for (\ref{081101}) only in the sequel. As a consequence of (\ref{082811}), we have for any $c>0$, 
\begin{align*}
C^{-1}N^{-\frac{4}{3}(1+\tau)c^{3/2}}\leq \mathbb{P}\Big(N^{2/3}(\lambda_1(\theta)-2)\geq c(\log N)^{2/3}\Big)\leq CN^{-\frac{4}{3}(1-\tau)c^{3/2}}. 
\end{align*}
Recall that $\theta_i$ is the middle point of $I_i$. Then for each $j=1, \ldots, N^{2\varepsilon}$, we have 
\begin{align*}
&\mathbb{P}\Big(N^{2/3}(\max_{i:I_i\subset \mathcal{I}_j}\lambda_1(\theta_i)-2)\geq c(\log N)^{2/3}\Big)\notag\\
&\leq L_N^{\varepsilon}\mathbb{P}\Big(N^{2/3}(\lambda_1(\theta)-2)\geq c(\log N)^{2/3}\Big)\notag\\
&\leq C L_N^{\varepsilon}N^{-\frac{4}{3}(1-\tau)c^{3/2}}
\end{align*}
For any $c_1>\frac{1}{4}$, by choosing $\varepsilon$ and $\tau$ sufficiently small, we have 
\begin{align}
\mathbb{P}\Big(N^{2/3}(\max_{i:I_i\subset \mathcal{I}_j}\lambda_1(\theta_i)-2)\geq c_1(\log N)^{2/3}\Big)\leq N^{-5\varepsilon}. \label{082910}
\end{align}
On the other hand, for any $c_2<\frac{1}{4}$, we shall show 
\begin{align}
\mathbb{P}\Big(N^{2/3}(\max_{i:I_i\subset \mathcal{I}_j}\lambda_1(\theta_i)-2)\geq c_2(\log N)^{2/3}\Big)\geq 1-N^{-5\varepsilon}.\label{081203}
\end{align} 
To this end, we recall the notation $\mathcal{F}_{\theta}(x)$ from Proposition \ref{lem.decorrelation}, and adopt the abbrevtaion
\begin{align*}
\mathcal{F}_i\equiv \mathcal{F}_{\theta_i}(c_2)
\end{align*}
in the sequel. 
We only need to estimate 
\begin{align*}
\mathbb{P}\Big(\sum_{i:I_i\subset \mathcal{I}_j}\mathbf{1}(\mathcal{F}_i)\geq 1 \Big)
\end{align*}
Note that for any fixed $c_2<\frac{1}{4}$, choosing $\varepsilon$ and $\tau$ sufficiently small, we have 
\begin{align}
\mathbb{E} \sum_{i:I_i\subset \mathcal{I}_j}\mathbf{1}(\mathcal{F}_i)=\sum_{i:I_i\subset \mathcal{I}_j}\mathbb{P}(\mathcal{F}_i)= L_N^{\varepsilon} \mathbb{P}(\mathcal{F}_1)\gtrsim N^{\frac{1}{6}-\varepsilon-\frac{4}{3}(1+\tau)c_2^{3/2}}\to \infty,  \text{  as  } N\to \infty. \label{081202}
\end{align}
Hence, by Chebyshev's inequality,
\begin{align}
\mathbb{P}\Big(\sum_{i:I_i\subset \mathcal{I}_j}\mathbf{1}(\mathcal{F}_i)=0 \Big) &\leq \mathbb{P} \Big(\Big|\sum_{i:I_i\subset \mathcal{I}_j}\big(\mathbf{1}(\mathcal{F}_i)-\mathbb{P}(\mathcal{F}_i)\big)\Big|>\frac{ \sum_{i:I_i\subset \mathcal{I}_j}\mathbb{P}(\mathcal{F}_i)}{2}\Big)\notag\\
&\lesssim \Big(\sum_{i:I_i\subset \mathcal{I}_j}\mathbb{P}(\mathcal{F}_i)\Big)^{-2}\text{Var}\Big( \sum_{i:I_i\subset \mathcal{I}_j}\mathbf{1}(\mathcal{F}_i) \Big). \label{081201}
\end{align}
Notice that 
\begin{align}
\text{Var}\Big( \sum_{i:I_i\subset \mathcal{I}_j}\mathbf{1}(\mathcal{F}_i) \Big)=\sum_{i:I_i\subset \mathcal{I}_j}\text{Var}(\mathbf{1}(\mathcal{F}_i))+\sum_{k\neq \ell}\Big(\mathbb{P}(\mathcal{F}_k\cap \mathcal{F}_{\ell})-\mathbb{P}(\mathcal{F}_k)\mathbb{P}(\mathcal{F}_{\ell})\Big), \label{082905}
\end{align}
where the sum over all $k\neq \ell$ is subject to $I_k,I_\ell\subset\mathcal{I}_j$. We nevertheless omit this constraint from the notation hereafter for brevity. 
First, we have
\begin{align}
\sum_{i:I_i\subset \mathcal{I}_j}\text{Var}(\mathbf{1}(\mathcal{F}_i))\leq \sum_{i:I_i\subset \mathcal{I}_j}\mathbb{P}(\mathcal{F}_i). \label{082902}
\end{align}
Further, we perform the decomposition into the short range sum and the long range sum 
\begin{align}
\sum_{k\neq \ell}\Big(\mathbb{P}(\mathcal{F}_k\cap \mathcal{F}_{\ell})-\mathbb{P}(\mathcal{F}_k)\mathbb{P}(\mathcal{F}_{\ell})\Big)=&\sum_{k}\sum_{\ell: 1\leq |\ell-k|\leq N^{\frac{1}{12}}}\Big(\mathbb{P}(\mathcal{F}_k\cap \mathcal{F}_{\ell})-\mathbb{P}(\mathcal{F}_k)\mathbb{P}(\mathcal{F}_{\ell})\Big)\notag\\
&+\sum_{k}\sum_{\ell: |\ell-k|> N^{\frac{1}{12}}}\big(\mathbb{P}(\mathcal{F}_k\cap \mathcal{F}_{\ell})-\mathbb{P}(\mathcal{F}_k)\mathbb{P}(\mathcal{F}_{\ell})\big). \label{082901}
\end{align}
For the first term in the RHS of (\ref{082901}), by the decorrelation estimate in Proposition \ref{lem.decorrelation}, we can bound it crudely by 
\begin{align}
\sum_{k}\sum_{\ell: 1\leq |\ell-k|\leq N^{\frac{1}{12}}}\Big(\mathbb{P}(\mathcal{F}_k\cap \mathcal{F}_{\ell})-\mathbb{P}(\mathcal{F}_k)\mathbb{P}(\mathcal{F}_{\ell})\Big)\lesssim L_N^{\varepsilon}N^{\frac{1}{12}}\Big(\mathbb{P}(\mathcal{F}_1)\Big)^2\sim N^{\frac{1}{6}-\varepsilon+\frac{1}{12}-\frac{8}{3}(1-\tau)c_2^{3/2}}. \label{082903}
\end{align}
For the second term in the RHS of (\ref{082901}), again, by decorrelation estimate in Proposition \ref{lem.decorrelation}, we can bound it by 
\begin{align}
&\sum_{k}\sum_{\ell: |\ell-k|> N^{\frac{1}{12}}}\big(\mathbb{P}(\mathcal{F}_k\cap \mathcal{F}_{\ell})-\mathbb{P}(\mathcal{F}_k)\mathbb{P}(\mathcal{F}_{\ell})\big)\notag\\
&\lesssim N^{-\delta} \sum_{k}\sum_{\ell: |\ell-k|> N^{\frac{1}{12}}} \mathbb{P}(\mathcal{F}_k)\mathbb{P}(\mathcal{F}_{\ell})\lesssim N^{-\delta}\big(L_N^{\varepsilon} \mathbb{P}(\mathcal{F}_1)\big)^2, \label{082904}
\end{align}
where $\delta>0$ is independent of $\varepsilon$. 

Plugging (\ref{082902})-(\ref{082904}) to bound the variance in (\ref{082905}), and further applying this bound together with the expectation estimate in (\ref{081202}) to (\ref{081201}), we can conclude that 
\begin{align*}
\mathbb{P}\Big(\sum_{i:I_i\subset \mathcal{I}_j}\mathbf{1}(\mathcal{F}_i)=0 \Big)\lesssim N^{-\delta}+N^{-\frac{1}{12}+\varepsilon+\frac{16}{3}\tau c_2^{3/2}}.
\end{align*}
Since $\delta$ is independent of $\varepsilon$, we can always choose $\varepsilon$ and $\tau$ sufficiently small so that (\ref{081203}) is true. 

With (\ref{082910}) and (\ref{081203}), we can conclude that for any $\epsilon_1>0$, by choosing $\varepsilon$ sufficiently small, 
\begin{align*}
\mathbb{P}\Big(\Big|\max_{i: I_i\subset \mathcal{I}_j} \lambda_1(\theta_i)-2-\frac{(\log N)^{2/3}}{4 N^{2/3}}\Big|\geq \epsilon_1 \frac{(\log N)^{2/3}}{N^{2/3}}\Big)\leq N^{-4\varepsilon}. 
\end{align*}
Further, with the correlation estimate in Proposition \ref{lem.correlation}, we can replace all $\lambda_1(\theta_i)$ by $\max_{\theta\in I_i} \lambda_1(\theta)$ in the above estimate and conclude the proof of (\ref{081101}) by slightly adjusting the value of $\epsilon_1$. Finally, by considering a union of the event in (\ref{081101}) over all $\mathcal{I}_j$'s and a further extension to the union of similar events over all 4 quadrants, we can conclude the proof of (\ref{r plus}). The proof of (\ref{r minus}) is similar. The only difference is that instead of the right tail estimate, we need to use the left tail estimate in Proposition \ref{pro.tail} (ii).  Hence, we omit the details. This conclude the proof of Theorem \ref{maintheorem}. 

\end{proof}

\section{Correlation Regime: Proof of Proposition \ref{lem.correlation}} \label{s.correlation}
In this section, we establish the desired estimate in the correlation regime, i.e., (\ref{correlation}). We  claim that for any $\theta, \tilde{\theta}$ such that $|\theta-\tilde{\theta}|\leq N^{-\frac16-\varepsilon}$, we have
\begin{align}
\mathbb{P}\Big(\Big|\lambda_1(\theta)-\lambda_1(\tilde{\theta})\Big|\geq N^{-2/3-\tilde{\varepsilon}}\Big)\leq N^{-D}\label{correlation-two-theta},
\end{align}
for some $\tilde{\varepsilon}=\tilde{\varepsilon}(\varepsilon, D)>0$.
 The proof of the claim \eqref{correlation-two-theta} is presented after concluding the proof of  \eqref{correlation}. 

\begin{proof}[Proof of \eqref{correlation}]

Given \eqref{correlation-two-theta}, to conclude \eqref{correlation} we discretize $I_i$ and use a continuity argument to conclude \eqref{correlation}. More precisely, we divide the interval $I_i$ into a mesh of $N^{10}$ equidistant points $\mathcal{P}_N$. For each $\theta,\widetilde{\theta}\in \mathcal{P}_N$ we have \eqref{correlation-two-theta}. If, instead, one of them $\theta\notin \mathcal{P}_N$, denoting by $\theta_*\in\mathcal{P}_N$ the closest element of $\mathcal{P}_N$ to $\theta$, we use (here ${\bf w}_i^\theta$ are the eigenvectors of $\mathcal{H}(\theta)$)
\[
\big|\lambda_1(\theta)-\lambda_1(\theta_*)\big|= \frac{1}{\sqrt{2}}\left|\int_{\theta}^{\theta_*} \langle {\bf w}_1^\tau, \big(e^{\ii\tau}A-e^{-\ii\tau}A^*\big){\bf w}_1^\tau\rangle\,\dd \tau\right|\le |\theta-\theta_*| \lVert A\rVert \le N^{-10},
\]
where we used that $\lVert A\rVert\lesssim 1$ with very high probability, and such an high probability event is independent of $\theta$.  Hence, \eqref{correlation-two-theta} holds for this $\theta$ as well. This gives \eqref{correlation}.

\end{proof}

We now present the proof of our claim \eqref{correlation-two-theta}.

\begin{proof}[Proof of \eqref{correlation-two-theta}]

We first prove \eqref{correlation-two-theta} in the Gaussian case, and then we extend it to the general case via comparison.

\textbf{Gaussian case.}  We present two different proofs. The first one relies on a strong control of quadratic forms of eigenvectors of Wigner matrices from \cite{CES21}. The second one is more elementary and self-contained. 

\smallskip

\textbf{Proof 1:} By first order perturbation theory
\[
\lambda_1(\theta)-\lambda_1(\theta_*)= \frac{1}{\sqrt{2}}\int_{\theta}^{\theta_*} \langle {\bf w}_1^\tau, \big(e^{\ii\tau}A-e^{-\ii\tau}A^*\big){\bf w}_1^\tau\rangle\,\dd \tau.
\]
Notice that when $A$ is Gaussian the matrices $\mathcal{H}(\tau)$ and $e^{\ii\tau}A-e^{-\ii\tau}A^*$ are independent. Hence, conditioning on $e^{\ii\tau}A-e^{-\ii\tau}A^*$, by \cite[Theorem 2.2]{CES21} (see also \cite[Theorem 2.2]{CEH23}) we have
\[
\big| \langle {\bf w}_1^\tau, \big(e^{\ii\tau}A-e^{-\ii\tau}A^*\big){\bf w}_1^\tau\rangle\big|\prec \frac{N^\xi}{\sqrt{N}},
\]
 for any arbitrary small $\xi>0$.  The uniformity in $\tau$ of the above estimate again simply follows from the continuity. This immediately shows \eqref{correlation-two-theta}.

\smallskip

\textbf{Proof 2:} Notice that we only need to prove two one-sided estimates
\begin{align}
\mathbb{P}\Big(\lambda_1(\theta)-\lambda_1(\tilde{\theta})\geq N^{-2/3-\tilde{\varepsilon}}\Big)\leq N^{-D}, \qquad \mathbb{P}\Big(\lambda_1(\tilde{\theta})-\lambda_1(\theta)\geq N^{-2/3-\tilde{\varepsilon}}\Big)\leq N^{-D}. \label{082605}
\end{align}
As we mentioned earlier, in the Gaussian case, $\mathcal{H}(\theta)$ is a stationary matrix-valued Gaussian process; see (\ref{091102}).   Since we are always at equilibrium, we can embed the two dimensional marginal of $\mathcal{H}(\theta)$  to a matrix valued (complex) OU process $X(t)$ (say) in either direction. More specifically,  as a consequence of
\[
\E \langle X(t), X(0)\rangle =e^{-t/2}, \qquad\quad \E \langle \mathcal{H}(\theta), \mathcal{H}(\tilde{\theta})\rangle=\cos(\theta-\tilde{\theta}),
\]
 we can view $(\mathcal{H}(\theta), \mathcal{H}(\tilde{\theta}))$ as $(X(0), X(t_0))$, or view $(\mathcal{H}(\tilde{\theta}), \mathcal{H}(\theta))$ as $(X(0), X(t_0))$, where $e^{-t_0/2}=\cos(\theta-\tilde{\theta})$, i.e. 
\begin{align*}
t_0\sim -\log \big(\cos(\theta-\tilde{\theta})\big)\sim (\theta-\tilde{\theta})^2. 
\end{align*}
In the sequel, we will take the first embedding, i.e, $((\mathcal{H}(\theta), \mathcal{H}(\tilde{\theta})))=(X(0), X(t_0))$ and use it to prove the first estimate in (\ref{082605}). The second estimate in (\ref{082605}) will follow from the same argument by using the other embedding. Hence, we can then embed $\lambda_1(\theta)$ and $\lambda_1(\tilde{\theta})$ to a (stationary) DBM so that $\theta$ corresponds to time 0 and $\tilde{\theta}$ corresponds to a $0<t_0\leq  N^{-1/3-2\varepsilon}$. We use $\mu_i(t)$ to denote the $i$-th largest eigenvalue in the DBM at time $t$. We recall the DBM for $\mu_1$:
\begin{align}
{\rm d}\mu_1= \frac{{\rm d} \beta_1}{\sqrt{N}}+\Big(\frac{1}{N}\sum_{\ell\geq 2} \frac{1}{\mu_1-\mu_\ell}-\frac{\mu_1}{2}\Big){\rm d} t, \label{082601}
\end{align}
 where $\beta_1$ is a standard real Brownian motion.  Fix any small $\tilde{\delta}>0$, we then write
 \begin{equation}
\begin{split}
\label{082602}
\frac{1}{N}\sum_{\ell\geq 2} \frac{1}{\mu_1-\mu_\ell}&=\frac{1}{N}\sum_{\ell=2}^{N^{\tilde{\delta}}} \frac{1}{\mu_1-\mu_\ell}+\frac{1}{N}\sum_{\ell>N^{\tilde{\delta}}} \frac{1}{\mu_1-\mu_\ell} \\
&>\frac{1}{N}\sum_{\ell>N^{\tilde{\delta}}} \frac{1}{\mu_1-\mu_\ell},
\end{split}
\end{equation}
where we used that $\mu_1-\mu_\ell>0$ for $\ell\ge 2$. 
 Next, using the rigidity estimate and local law in \cite[Theorems 2.1 and 2.2]{EYY12}, for $\eta:=N^{-2/3+\tilde{\delta}/2}$, we can write with very high probability
\begin{equation}
\label{eq:apprmu2}
\begin{split}
\frac{1}{N}\sum_{\ell>N^{\tilde{\delta}}} \frac{1}{\mu_1-\mu_\ell}=&\frac{1}{N}\sum_{\ell>N^{\tilde{\delta}}} \frac{1}{\mu_1+\ii\eta-\mu_\ell}+O\big(N^{-2/3+{\tilde{\delta}}}\big) \\
&=\frac{1}{N}\sum_{\ell=1}^N\frac{1}{\mu_1+\ii\eta-\mu_\ell}+O\big(N^{-1/3+{\tilde{\delta}}}\big) \\
&=-m_{\mathrm{sc}}(\mu_1+\ii\eta)+O\big(N^{-1/3+{\tilde{\delta}}}\big)\\
&=\frac{\mu_1}{2}+O\big(N^{-1/3+{\tilde{\delta}}}\big).
\end{split}
\end{equation}
Integrating \eqref{082601} over the time domain $[0, t_0]$ and using \eqref{082602}--\eqref{eq:apprmu2}, we thus obtain
\begin{equation}
\label{eq:lowbdiff}
\mu(t_0)-\mu(0)\ge \frac{\beta_1(t_0)}{\sqrt{N}}-t_0N^{-1/3+\tilde{\delta}}.
\end{equation}
Using standard large deviation estimates for the Brownian motion we obtain
\begin{equation}
\label{eq:boundBM}
|\beta_1(t_0)|\le N^\xi\sqrt{\frac{t_0}{N}}\sim N^{-2/3-\varepsilon+\xi},
\end{equation}
with very high probability. Recalling that $t_0\le N^{-1/3-2\varepsilon}$ and combining \eqref{eq:lowbdiff}--\eqref{eq:boundBM}, we conclude
\[
\mu(t_0)-\mu(0)\geq -CN^{-2/3-\tilde{\varepsilon}},
\]
for some $C>0$, and $\tilde{\varepsilon}:=(\varepsilon-\xi)\wedge(2\varepsilon-\tilde{\delta})$. Noticing that $\mu(t_0)-\mu(0)=\lambda_1(\tilde{\theta})-\lambda_1(\theta)$, this concludes the first relation in \eqref{082605}, and hence the proof.

\medskip

\textbf{General case.} We now prove \eqref{correlation-two-theta} for matrices with general entry distribution. This will be achieved via a comparison argument. We first relate the probability in \eqref{correlation-two-theta} to certain linear statistics of eigenvalues (see \eqref{eq:rellinstat} below) and then use a \emph{Green's function comparison argument (GFT)} for these statistics to show that they behave as in the Gaussian case which was already understood above. Notice that by the rigidity estimate  in \cite[Theorem 2.2]{EYY12},  we have
\begin{equation}
\begin{split}
\label{eq:prelim}
&\mathbb{P}\Big(\Big|\lambda_1(\theta)-\lambda_1(\tilde{\theta})\Big|\geq N^{-2/3-\tilde{\varepsilon}}\Big) \notag\\
&=\mathbb{P}\Big(\Big|\lambda_1(\theta)-\lambda_1(\tilde{\theta})\Big|\geq N^{-2/3-\tilde{\varepsilon}}, |\lambda_1(\theta)-2|\vee  |\lambda_1(\tilde{\theta})-2|\leq N^{-2/3+\tilde{\varepsilon}}\Big)+N^{-D}. 
\end{split}
\end{equation}
We then do the decomposition
\begin{align*}
&\big[2-N^{-2/3+\tilde{\varepsilon}}, 2+N^{-2/3+\tilde{\varepsilon}}\big]=\bigcup_{\ell=1}^{8N^{2\tilde{\varepsilon}}}J_{\ell}, \notag\\
&J_{\ell}=\big[2-N^{-2/3+\tilde{\varepsilon}}+\frac{1}{4}(\ell-1) N^{-2/3-\tilde{\varepsilon}}, 2-N^{-2/3+\tilde{\varepsilon}}+\frac{1}{4}\ell N^{-2/3-\tilde{\varepsilon}}\big]=:[r_{\ell-1}, r_\ell]. 
\end{align*}
We further notice that 
\begin{align*}
\mathcal{E}:=\Big\{ \big|\lambda_1(\theta)-\lambda_1(\tilde{\theta})\big|\geq N^{-2/3-\tilde{\varepsilon}}, |\lambda_1(\theta)-2|\vee  |\lambda_1(\tilde{\theta})-2|\leq N^{-2/3+\tilde{\varepsilon}}\Big\}= \bigcup_{\ell=1}^{8N^{2\tilde{\varepsilon}}}\mathcal{E}_{\ell},
\end{align*}
where 
\begin{align*}
\mathcal{E}_{\ell}:=\mathcal{E}\cap \big\{ \lambda_1(\theta)\in J_\ell\big\}. 
\end{align*}
Notice that
\begin{align}
\mathcal{E}_\ell\subseteq &\Big\{\lambda_1(\theta)\in [r_{\ell-1}, r_\ell]\Big\}\bigcap \Big\{\lambda_1(\tilde{\theta})\in [2-N^{-2/3+\tilde{\varepsilon}}, r_{\ell-3}]\cup [r_{\ell+2}, 2+N^{-2/3+\tilde{\varepsilon}}]\Big\}\notag\\
\subseteq &\Big\{\lambda_{1}(\theta)\in [r_{\ell-1}, 2+N^{-2/3+\tilde{\varepsilon}}], \lambda_{1}(\tilde{\theta})\leq r_{\ell-3}\Big\}\notag\\
&\bigcup \Big\{ \lambda_1(\tilde{\theta})\in [r_{\ell+2}, 2+N^{-2/3+\tilde{\varepsilon}}], \lambda_1(\theta)\leq r_{\ell}\Big\}, \label{082501}
\end{align}
where we make the convention that $\{\lambda_1(\tilde{\theta})\leq r_a\}=\emptyset$ if $a\leq 0$ and $\{\lambda_1(\tilde{\theta})\geq r_a\}=\emptyset$ if $a\geq 8N^{2\tilde{\varepsilon}}$. 
We set
\begin{align*}
L_{\ell}:= [r_{\ell}, 2+N^{-2/3+\tilde{\varepsilon}}], \qquad \ell=1,\ldots, 8N^{2\tilde{\varepsilon}}, 
\end{align*}
and we further make the convention that 
\begin{align*}
L_{\ell}:= [2-N^{-2/3+\tilde{\varepsilon}}, 2+N^{-2/3+\tilde{\varepsilon}}], \qquad \ell=-2,-1,0.
\end{align*} 
 We have
\begin{align}
\label{eq:rellinstat}
\text{RHS of (\ref{082501})}\subseteq &\Big\{\text{Tr}\chi_{L_{\ell-1}}(\mathcal{H}(\theta))\geq 1, \text{Tr} \chi_{L_{\ell-3}}(\mathcal{H}(\tilde{\theta}))=0\Big\} \notag\\
& \bigcup \Big\{ \text{Tr} \chi_{L_{\ell+2}}(\mathcal{H}(\tilde{\theta}))\geq 1, \text{Tr} \chi_{L_\ell}(\mathcal{H}(\theta))=0\Big\}.
\end{align}
Here we used the notation 
\begin{align*}
\text{Tr} \chi_{I}(\mathcal{H}(\theta))=\sum_{i=1}^{N} \mathbf{1}(\lambda_i(\theta)\in I). 
\end{align*}
In the Gaussian case,  by \eqref{correlation-two-theta} and the rigidity estimate in \cite[Theorem 2.2]{EYY12} we have  that the probability of both events in the right-hand side of \eqref{eq:rellinstat} is $N^{-D}$.  We now claim that
\begin{align}
&\mathbb{P}\left(\text{Tr} \chi_{L_{\ell+2}}(\mathcal{H}(\tilde{\theta}))\geq 1, \text{Tr} \chi_{L_\ell}(\mathcal{H}(\theta))=0\right)\notag\\
&\qquad+\mathbb{P}\left(\text{Tr}\chi_{L_{\ell-1}}(\mathcal{H}(\theta))\geq 1, \text{Tr} \chi_{L_{\ell-3}}(\mathcal{H}(\theta))=0\right)\le N^{-D}, \label{eq:claimgen}
\end{align}
for general matrices, for any $D>0$ and any $\ell\in\{1, 2, \dots, 8N^{2\tilde{\varepsilon}}\}$. We now just consider the first term in the left-hand-side of \eqref{eq:claimgen} since the estimate of the second one is completely analogous. This proof is similar to Section 3.2 of \cite{BCEHK25b}, we thus only explain the main steps for the convenience of the reader and omit the details.

 Let $F:\mathbb{R}_+\to\mathbb{R}_+$ be a smooth non-decreasing cut-off function such that $F(x)=0$ for $x\in [0,1/9]$ and $F(x)=1$ for $x\ge 2/9$. Then, it is easy to see that (here $\widetilde{F}:=1-F$)
\begin{align}
\mathbb{P}\left(\text{Tr} \chi_{L_{\ell+2}}(\mathcal{H}(\tilde{\theta}))\geq 1, \text{Tr} \chi_{L_\ell}(\mathcal{H}(\theta))=0\right)=\E\left[F\big(\text{Tr} \chi_{L_{\ell+2}}(\mathcal{H}(\tilde{\theta}))\big)\widetilde{F}\big(\text{Tr} \chi_{L_\ell}(\mathcal{H}(\theta))\big)\right]. \label{eq:expgft}
\end{align}
In order to apply a GFT argument we express the right-hand-side of \eqref{eq:expgft} in terms of resolvents. To this end, we set $G(z)\equiv G(\theta;z):=(\mathcal{H}(\theta)-z)^{-1}$ to be the resolvent. We define  
\begin{equation}
\label{eq:smooth}
\mathcal{X}_{\pm,\ell}(\mathcal{H}):= \chi_{[r_\ell\pm l, E_L]}*\xi_{\eta}\big(\mathcal{H}\big)=\frac{N}{\pi}\int_{r_\ell\pm l}^{E_L}\Im \mathrm{Tr} G(y+\ii\eta) \dd y.
\end{equation}
Here $E_L:=N^{-2/3+\tilde{\varepsilon}}$, $l:=N^{-2/3-2\tilde{\varepsilon}}$, $\eta:=N^{-2/3-20\tilde{\varepsilon}}$, and $\xi_\eta(x): =\eta/[\pi(x^2+\eta^2)]$ is a Cauchy mollifier.
Similarly to \cite[Lemma 2.3]{EX23}, we have
\begin{equation}
\label{eq:exptoresolvent}
	\begin{split}
&\E\left[F\big(\mathcal{X}_{+,\ell+2}(\mathcal{H}(\tilde{\theta}))\big)\widetilde{F}\big(\mathcal{X}_{-,\ell}(\mathcal{H}(\theta))\big)\right]+O(N^{-D}) \\
&\qquad\qquad\qquad\qquad\qquad\quad\le\E\left[F\big(\text{Tr} \chi_{L_{\ell+2}}(\mathcal{H}(\tilde{\theta}))\big)\widetilde{F}\big(\text{Tr} \chi_{L_\ell}(\mathcal{H}(\theta))\big)\right] \\
&\qquad\qquad\qquad\qquad\qquad\qquad\qquad\le\E\left[F\big(\mathcal{X}_{-,\ell+2}(\mathcal{H}(\tilde{\theta}))\big)\widetilde{F}\big(\mathcal{X}_{+,\ell}(\mathcal{H}(\theta))\big)\right]+O(N^{-D}),
	\end{split}
\end{equation}

Finally, we use an analogous GFT argument to \cite[Proposition 3.4]{BCEHK25b}. For this purpose, we consider the Ornstein-Uhlenbeck flow
\begin{equation}
\label{eq:OU}
\dd A_t=-\frac{1}{2}A_t\dd t+\frac{\dd B_t}{\sqrt{N}}, \qquad\quad A_0=A.
\end{equation}
Here $B_t$ is a matrix valued standard  Hermitian Brownian motion, i.e. $(B_t)_{ij}$ are i.i.d. standard complex Brownian motions for $i\ge j$ and $B_t^*=B_t$. Note that along the flow \eqref{eq:OU} the first two moments of $A_t$ are preserved. Hence, the limiting eigenvalue distribution of $\mathcal{H}_t(\theta)$ is given by the semicircular law for any $t\ge 0$, and $\mathcal{H}_\infty(\theta)\stackrel{\dd}{=}GUE$.

We are now ready to state our GFT argument. The proof of this result is presented at the end of this section. We now focus only on the term in the first line of \eqref{eq:exptoresolvent}, since the estimate of the one in the third line is completely analogous.
\begin{lem}
\label{lem:GFT}
Define
\begin{equation}
\label{eq:defrt}
R_t:=F\big(\mathcal{X}_{+,\ell+2}(\mathcal{H}_t(\tilde{\theta}))\big)\widetilde{F}\big(\mathcal{X}_{-,\ell}(\mathcal{H}_t(\theta))\big).
\end{equation}
Then, there exists $C>0$ such that for any $t\ge 0$ we have 
\begin{equation}
\label{eq:gronwall}
\big|\partial_t \E R_t\big|\lesssim N^{-1/6+C\tilde{\epsilon}}\E\left[F\big(\mathcal{X}_{-,\ell+2}(\mathcal{H}_t(\tilde{\theta}))\big)\widetilde{F}\big(\mathcal{X}_{+,\ell}(\mathcal{H}_t(\theta))\big)\right]+O(N^{-D}).
\end{equation}
\end{lem}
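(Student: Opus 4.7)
The plan is to follow the standard strategy for Green's function comparison along the Ornstein--Uhlenbeck flow \eqref{eq:OU}, adapted to handle the fact that $R_t$ depends \emph{simultaneously} on the two matrices $\mathcal{H}_t(\theta)$ and $\mathcal{H}_t(\tilde{\theta})$, which share the underlying random matrix $A_t$ but with different phases $e^{\ii\theta}, e^{\ii\tilde\theta}$. First, I would apply It\^o's formula to $\E R_t$, writing the time derivative as a sum over matrix entries $(a_t)_{ij}$ of drift terms (of the form $-\tfrac12 \E[(a_t)_{ij}\,\partial_{(a_t)_{ij}} R_t]$) and diffusion terms (of the form $\tfrac{1}{2N}\E[\partial_{(a_t)_{ij}}\partial_{(\overline{a_t})_{ij}} R_t]$), using the chain rule through
\[
\partial_{(a_t)_{ij}} \mathcal{H}_t(\theta)_{kl}=\tfrac{1}{\sqrt{2}}e^{\ii\theta}\delta_{ik}\delta_{jl}, \qquad \partial_{(\overline{a_t})_{ij}} \mathcal{H}_t(\theta)_{kl}=\tfrac{1}{\sqrt{2}}e^{-\ii\theta}\delta_{il}\delta_{jk},
\]
and analogously for $\tilde\theta$. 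Next, I would apply a truncated complex cumulant expansion to each drift term $\E[(a_t)_{ij} \Phi]$, with $\Phi=\partial_{(a_t)_{ij}} R_t$. Since the OU flow \eqref{eq:OU} preserves the first two moments of each entry, the contribution from the second cumulant exactly cancels the diffusion term, and the contribution from the first cumulant vanishes since entries are centered.

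The leading surviving contribution therefore comes from the third cumulant, which is of size $O(N^{-3/2})$ per entry, summed over $N^2$ entries, producing a factor $N^{1/2}$. Each such term contains three derivatives of $R_t$ with respect to entries, and each derivative of $\mathcal{X}_{\pm,\ell}$ produces, via \eqref{eq:smooth} and $\partial_{(a_t)_{ij}} G = -G E^{(ij)} G$ (with the appropriate phase factor), an integrated product of resolvent entries $G(\theta;y+\ii\eta)$ or $G(\tilde\theta;y+\ii\eta)$. Using the local semicircle law near the spectral edge (from \cite{EYY12}), together with the fact that $y$ runs over the narrow window of length $E_L\sim N^{-2/3+\tilde\epsilon}$ and $\eta = N^{-2/3-20\tilde\epsilon}$, each resolvent factor is bounded by $N^{\tilde\epsilon}$ times a deterministic quantity of order $1$ after performing the $y$-integral. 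Combined with the combinatorics of the three derivatives, this yields an overall prefactor of $N^{-1/6+C\tilde\epsilon}$. The derivatives of $F$ and $\widetilde{F}$ that are produced by the chain rule are bounded by constants (since $F$ is smooth and bounded), but crucially they localize the remaining $F$-factors to slightly shifted windows: since $l\gg \eta$, a standard deterministic comparison shows that on the event $\{\mathcal{X}_{+,\ell+2}(\mathcal{H}_t(\tilde\theta))\in[1/9,2/9]\}$ one has $\mathcal{X}_{-,\ell+2}(\mathcal{H}_t(\tilde\theta))\gtrsim 1/9$, and symmetrically with $\widetilde{F}$. This produces precisely the factor $F(\mathcal{X}_{-,\ell+2}(\mathcal{H}_t(\tilde\theta)))\widetilde{F}(\mathcal{X}_{+,\ell}(\mathcal{H}_t(\theta)))$ on the right-hand side of \eqref{eq:gronwall}.

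Finally, I would show that all cumulants of order $\ge 4$ contribute only $O(N^{-D})$ thanks to the moment assumption in \eqref{082701} combined with the $\eta^{-1}\le N^{2/3+O(\tilde\epsilon)}$ bound on resolvent entries; the cumulant expansion can be truncated at a large but fixed order since each additional cumulant costs a factor of $N^{-1/2}$ while gaining at most $N^{O(\tilde\epsilon)}$ from an extra resolvent. One also uses a standard Gaussian divisible decomposition or a direct cutoff to restrict to the high-probability event $\lVert A_t\rVert\lesssim 1$, on which all resolvent bounds apply uniformly along the flow.

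The main obstacle I anticipate is bookkeeping the mixed third-cumulant terms where the three derivatives split between $\mathcal{X}_{+,\ell+2}(\mathcal{H}_t(\tilde\theta))$ and $\mathcal{X}_{-,\ell}(\mathcal{H}_t(\theta))$. Because the same entry $(a_t)_{ij}$ appears in both matrices (with different phases), any distribution of derivatives across the two $F$-factors is possible, and one must verify that the \emph{worst} such configuration still obeys the $N^{-1/6+C\tilde\epsilon}$ bound and produces a factor dominated by $F(\mathcal{X}_{-,\ell+2}(\mathcal{H}_t(\tilde\theta)))\widetilde{F}(\mathcal{X}_{+,\ell}(\mathcal{H}_t(\theta)))$. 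The phases $e^{\ii\theta}, e^{\ii\tilde\theta}$ fortunately drop out in absolute value, so this reduces to the same type of bookkeeping performed in \cite[Proposition 3.4]{BCEHK25b}, whose argument can be followed nearly verbatim after the reduction above.
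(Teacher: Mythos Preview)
Your proposal is correct and follows the same route as the paper: It\^o's formula along the OU flow, a cumulant expansion with exact second-order cancellation, estimation of the surviving terms via the single-resolvent local law at the edge to produce the $N^{-1/6+C\tilde\epsilon}$ factor, and finally the monotonicity $\mathcal{X}_{+,\ell}\le \mathcal{X}_{-,\ell}$ to replace the derivatives $F^{(p)},\widetilde F^{(q)}$ by $F,\widetilde F$ on slightly shifted windows. One minor slip: the cumulants of order $\ge 4$ are not $O(N^{-D})$ but rather $O(N^{-2/3+C\tilde\epsilon})$, $O(N^{-7/6+C\tilde\epsilon})$, etc.\ (your own power-counting $N^{-1/2+O(\tilde\epsilon)}$ per extra order yields exactly this), so they are simply dominated by the third-order contribution; the $O(N^{-D})$ remainder arises only from truncating the expansion at a large fixed order $K$.
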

Note that the term in the rhs. of \eqref{eq:gronwall} differs from $R_t$, i.e. in \eqref{eq:smooth} $r_\ell+l$ is replaced by $r_\ell-l$, however this minor change can be neglected since $l$ is much smaller than $N^{-2/3-\tilde{\epsilon}}$ (see the explanation around \cite[Eq. (2.62)]{EX23} for more details). Integrating \eqref{eq:gronwall} in time we thus get
\begin{equation}
\label{eq:aftgron}
\big|\E\big[R_t-R_0\big]\big|\lesssim N^{-1/6+C\tilde{\epsilon}} \log N\big|\E R_t\big|+O(N^{-D}),
\end{equation}
for any $t\in [0,C_1\log N]$, for a large $C_1>0$. The regime $t\in (C_1\log N,\infty)$ can be trivially estimate by perturbation theory, since $\mathcal{H}_t\stackrel{\dd}{=}GUE+O(N^{-C_1/2})$, for $t\in (C_1\log N,\infty)$, where the error is meant in operator norm. Hence, \eqref{eq:aftgron} can be extended to the whole $t\in [0,\infty)$ regime. From \eqref{eq:aftgron}, we thus get
\begin{equation}
\label{eq:conclgronw}
\E\big[R_0\big]=\big(1+O(N^{-1/6+C\tilde{\epsilon}})\big)\E\big[R_t\big]+O(N^{-D}), \qquad\quad t \in[0,\infty).
\end{equation}
Finally, using \eqref{eq:conclgronw}, that the regime $t\in (C_1\log N,\infty)$ is negligible by a simple perturbation theory as above, and the fact that both sides of the inequality \eqref{eq:exptoresolvent} are bounded by $N^{-D}$, for any $D>0$, in the Gaussian case as a consequence of \eqref{correlation-two-theta}, \nc we readily obtain
\[
\left|\E\left[F\big(\mathcal{X}_{+,\ell+2}(\mathcal{H}(\tilde{\theta}))\big)\widetilde{F}\big(\mathcal{X}_{-,\ell}(\mathcal{H}(\theta))\big)\right]\right|+\left|\E\left[F\big(\mathcal{X}_{-,\ell+2}(\mathcal{H}(\tilde{\theta}))\big)\widetilde{F}\big(\mathcal{X}_{+,\ell}(\mathcal{H}(\theta))\big)\right]\right|\le N^{-D},
\]
for any $D>0$. This, together with \eqref{eq:prelim}--\eqref{eq:rellinstat} and \eqref{eq:exptoresolvent}, concludes the proof of \eqref{correlation-two-theta}.

\end{proof}

We conclude this section with the proof of Lemma \ref{lem:GFT}. 
\begin{proof}[Proof of Lemma~\ref{lem:GFT}]

The argument of this proof is similar to the proofs of \cite[Theorem 2.4]{EX23} and \cite[Proposition 3.4]{BCEHK25b}, \nc we will thus only explain the main steps. By It\^{o}'s formula we have
\begin{equation}
\partial_t \E R_t=\E\left[-\frac{1}{2}\sum_{a,b=1}^N \big[(A_t)_{ab}\partial_{ab} R_t+\overline{(A_t)_{ba}}\overline{\partial_{ba}} R_t]+\frac{1}{N}\sum_{a,b=1}^N \partial_{ab}\overline{\partial_{ab}} R_t\right],
\end{equation}
where $\partial_{ab}$ denotes the directional derivative in the direction $(A_t)_{ab}$. Performing a cumulant expansion (see e.g. \cite{HK17, KKP96}), for any $D>0$, choosing $K=K(D)$ sufficiently large, we get
\begin{equation}
\label{eq:aftercum}
\partial_t \E R_t=\sum_{k=2}^K\sum_{l=0}^{k+1} \sum_{a,b=1}^N\frac{\kappa_{ab}^{l,k}(t)}{k!}\partial_{ab}^l (\overline{\partial_{ab}})^{k+1-l}\E\big[ R_t\big]+O(N^{-D}),
\end{equation}
where $\kappa_{ab}^{l,k}(t)$ denotes the joint cumulant of $l$-copies of $(A_t)_{ab}$ and $k+1-l$-copies of $\overline{(A_t)_{ab}}$. In particular, we notice that only third or higher order cumulants remain in \eqref{eq:aftercum}.

Notice, that \eqref{eq:aftercum} corresponds to \cite[Eq. (3.16)]{BCEHK25b}. The only input used in \cite{BCEHK25b} to estimate the rhs. of  \eqref{eq:aftercum} is the single resolvent local law
\[
\max_{i,j\in [N]}\big|\big(G^\theta(z)-m_{\mathrm{sc}}(z)\big)_{ij}\rangle\big|\prec N^{-1/3+C\tilde{\epsilon}}, \qquad\quad |\Im m_{\mathrm{sc}}(z)|\sim N^{-1/3+C\tilde{\epsilon}},
\]
which holds in our case as well \cite[Theorem 2.1]{EYY12}.  Here $C>0$ is a large constant which may change from line to line. In particular, no relation of the spectra of $\mathcal{H}(\theta), \mathcal{H}(\tilde{\theta})$ is needed to estimate the rhs. of \eqref{eq:aftercum}, i.e. it is enough to only control the single resolvent of these matrices, no products of resolvents appear. Hence, proceeding as in \cite[Eqs. (3.18)--(3.23)]{BCEHK25b} we obtain
\begin{equation}
\label{eq:almfinGFT}
\partial_t \E R_t\lesssim N^{-1/6+C\tilde{\epsilon}}\sum_{1\le p+q\le K+1}\E\left[F^{(p)}\big(\mathcal{X}_{-,\ell+2}(\mathcal{H}_t(\tilde{\theta}))\big)\widetilde{F}^{(q)}\big(\mathcal{X}_{+,\ell}(\mathcal{H}_t(\theta))\right]+N^{-D}.
\end{equation}
Here for a function $f(x)$ by $f^{(p)}(x)$ we denote its $p$-th derivative. Then, by construction of $F,G$ it follows (see \cite[Eq. (3.24)]{BCEHK25b})
\begin{equation}
\label{eq:finGFT}
\E\left[F^{(p)}\big(\mathcal{X}_{-,\ell+2}(\mathcal{H}_t(\tilde{\theta}))\big)G^{(q)}\big(\mathcal{X}_{+,\ell}(\mathcal{H}_t(\theta))\right]\lesssim\E\left[F\big(\mathcal{X}_{+,\ell+2}(\mathcal{H}_t(\tilde{\theta}))\big)G\big(\mathcal{X}_{-,\ell}(\mathcal{H}_t(\theta))\big)\right]+N^{-D},
\end{equation}
where the implicit constant in the error term depends only on $p$ and $q$. Finally, combining \eqref{eq:almfinGFT}--\eqref{eq:finGFT} we conclude the proof.
\end{proof}

\section{Decorrelation Regime: Proof of Proposition~\ref{lem.decorrelation}} \label{s.decorrelation}

A natural way to prove the decorrelation estimate in Proposition~\ref{lem.decorrelation} is to proceed similarly to \cite[Sections 4-5]{BCEHK25a}, which relies on the Zigzag strategy from \cite{CEH23}. However, we now show that in the current case the estimate in Proposition~\ref{lem.decorrelation} can be achieved in a simpler way (see \cite{K25} for a related argument in the bulk of the spectrum). Let $\theta_1,\theta_2\in [0,\pi/2]$. We first consider the Gaussian case, and then we use a standard Green's function comparison theorem to extend this result to the general case. When $A$ is a complex Ginibre matrix, by \eqref{091102}, it follows that:
\begin{equation}
\begin{split}
\mathcal{H}(\theta_1)&\stackrel{\dd}{=}\sqrt{\cos(\theta_1-\theta_2)}\mathcal{G}_1+\sqrt{1-\cos(\theta_1-\theta_2)}\mathcal{G}_2, \\
\mathcal{H}(\theta_2)&\stackrel{\dd}{=}\sqrt{\cos(\theta_1-\theta_2)}\mathcal{G}_1+\sqrt{1-\cos(\theta_1-\theta_2)}\mathcal{G}_3,
\end{split} \label{092301}
\end{equation}
where $\mathcal{G}_1,\mathcal{G}_2,\mathcal{G}_3$ are three independent GUE matrices.

\begin{proof}[Proof of Proposition~\ref{lem.decorrelation}]  

Consider $\theta_1,\theta_2$ such that $|\theta_1-\theta_2|\ge N^{-1/6+\epsilon}$. We only present the proof of the decorrelation estimate for $\mathcal{F}_{\theta_1}(x_1), \mathcal{F}_{\theta_2}(x_2)$; the proof of the decorrelation estimate for $\mathcal{E}_{\theta_1}(x_1), \mathcal{E}_{\theta_2}(x_2)$ is completely analogous and so omitted.

For $l=1,2$, consider the flows:
\begin{equation}
\label{eq:matriflow}
\dd \mathcal{H}_t(\theta_l)=\frac{\dd B_{l,t}}{\sqrt{N}}, \qquad\quad \mathcal{H}_0(\theta_l)=\sqrt{\cos(\theta_1-\theta_2)}\mathcal{G}_1,
\end{equation}
where $B_{l,t}$ are two independent matrix valued Hermitian Brownian motions. Note that the initial condition in \eqref{eq:matriflow} is the same for both flows. We will run these flows up to a time $t_1= N^{-1/3+\omega_1}$ so that $\mathcal{H}_{t_1}(\theta_l)=\mathcal{H}(\theta_l)$, for $l=1,2$, i.e. we see the matrices $\mathcal{H}(\theta_l)$ as the final time of the matrix flow \eqref{eq:matriflow}.  The eigenvalues $\mu_i^{\theta_l}(t)$ of $\mathcal{H}_t(\theta_l)$ are the unique strong solution of the Dyson Brownian motion (see \cite{AGZ10, D62}): 
\begin{equation}
\label{eq:newDBM}
\dd \mu_i^{\theta_l}(t)=\frac{\dd b_i^{\theta_l}(t)}{\sqrt{N}}+\frac{1}{N}\sum_{j\ne i}\frac{1}{\mu_i^{\theta_l}(t)-\mu_j^{\theta_l}(t)}\dd t,
\end{equation}
where $b_i^{\theta_l}(t)$, for $i\in [N]$ and $l=1,2$, are an i.i.d. family of standard real Brownian motions. Since the evolution of the $\mu_i^{\theta_l}(t)$ in \eqref{eq:newDBM} is independent for $\theta_1,\theta_2$, the flows in \eqref{eq:newDBM} can be directly coupled to two fully independent processes. More precisely, we consider 
\begin{equation}
\label{eq:newDBM2}
\dd \nu_i^{(l)}(t)=\frac{\dd b_i^{\theta_l}(t)}{\sqrt{N}}+\frac{1}{N}\sum_{j\ne i}\frac{1}{\nu_i^{(l)}(t)-\nu_j^{(l)}(t)}\dd t,
\end{equation}
with independent initial data $\nu_i^{(l)}(0)$. Note that the driving Brownian motions are exactly the same as in \eqref{eq:newDBM}. Fix a small $c>0$. Then, by \cite[Theorem 2.8]{B21} we have
\begin{equation}
\sup_{1\le i \le N^c}\big|\mu_i^{\theta_l}(t_1)-\nu_i^{(l)}(t_1)\big|\le N^{-2/3-\omega_1/10}
\end{equation}
with very high probability, where $t_1:=N^{-1/3+\omega_1}$, for a fixed small $\omega_1>0$ which does not depend on $\varepsilon$ if $|\theta_1-\theta_2|\geq N^{-1/12}$ (say). Similarly to \cite[Lemma 3.4]{BCEHK25b} and the discussion around it, the error $N^{-2/3-\omega_1/10}$ can  be further ignored in the decorrelation relations in Proposition~\ref{lem.decorrelation}, up to a negligible error. This shows the desired decorrelation estimate in Proposition~\ref{lem.decorrelation} in the Gaussian case.

To conclude the proof, we perform a long time (i.e. $t\sim \log N$) GFT to transfer the decorrelation estimate Proposition~\ref{lem.decorrelation} from the Gaussian case to the general case. We notice that
\begin{equation}
\mathbb{P}\Big(\mathcal{F}_{\theta}(x)\cap \mathcal{F}_{\theta'}(x')\Big)=\E\left[F\big(\text{Tr} \chi_{[E_1,2+N^{-2/3+\varepsilon}]}(\mathcal{H}(\tilde{\theta}))\big)F\big(\text{Tr} \chi_{[E_2,2+N^{-2/3+\varepsilon}]}(\mathcal{H}(\theta))\big)\right]+O(N^{-D}),
\end{equation}
where for $l=1,2$ we denoted $E_l:=2+x_l(\log N)^{2/3}/N^{2/3}$. Hence, by bounds similar to \eqref{eq:exptoresolvent}, the proof of this GFT is completely analogous to the proof of Lemma~\ref{lem:GFT} above (see also the discussion in \cite[Eqs. (3.21)--(3.28)]{BCEHK25b}) with the only difference that  $\widetilde{F}$ needs to be replaced with $F$ (from above \eqref{eq:expgft}) in the definition of $R_t$ in \eqref{eq:defrt}, and thus omitted.
\end{proof}

\section{Elliptic ensemble}\label{s.elliptic}

In this section, we consider the elliptic ensemble in (\ref{090801}) and prove Theorem \ref{thm.elliptic}. 

\begin{proof}[Proof of Theorem \ref{thm.elliptic}]
We set 
\begin{align*}
H^{\gamma}(\theta)=(h_{ij}^{\gamma}(\theta)):= \big(e^{\ii \theta} A_{\gamma}+e^{-\ii \theta} A_{\gamma}^*\big)/2. 
\end{align*}
and we further denote by $\lambda^{\gamma}_1(\theta)$ the largest eigenvalue of $H^{\gamma}(\theta)$. 
It is elementary to check 
\begin{align*}
\mathbb{E}|h_{ij}^{\gamma}(\theta)|^2=\frac{1+{\gamma}\cos 2\theta}{2N}, \qquad \mathbb{E}h_{ij}^2(\theta)=0
\end{align*}
and 
\begin{align*}
\mathbb{E} h_{ij}^{\gamma}(\theta)\overline{h_{ij}^{\gamma}(\theta')}=\frac{\cos(\theta-\theta')+{\gamma}\cos (\theta+\theta')}{2N}, \qquad \mathbb{E} h_{ij}^{\gamma}(\theta){h_{ij}^{\gamma}(\theta')}=0.
\end{align*}
Especially, when $W$ is GUE and $A$ is complex Ginibre, $H^{\gamma}(\theta)$ is still a (scaled) GUE, but the variance will depend on $\theta$. When $\theta=0 \text{ or } \pi$, the variance reaches the maximum $(1+{\gamma})/2N$, while when $\theta=\pi/2 \text { or } 3\pi/2$, the variance reaches the minimum $(1-{\gamma})/2N$. Due to the change of the matrix entry variance, $\lambda_1^{\gamma}(\theta)$ is no longer stationary, as a process of $\theta$. The nonstationarity allows us to localize our discussion to small vicinities around $0$ and $\pi$ for $r_+(A^{\gamma})$, and those around $\pi/2$ and $3\pi/2$ for $r_-(A^{\gamma})$.  We further recall the assumptions 
\begin{align}
{\gamma}\in [N^{-1/3+\delta},1]\quad \text{or}\quad  {\gamma}\in [N^{-1/3+\delta},1-\delta']
\end{align}
in Theorem \ref{thm.elliptic} (i) or (ii), respectively.

By the rigidity estimate in \cite[Theorem 2.2]{EYY12}, we know
\begin{align*}
\mathbb{P}\Big(\big|\lambda_1^{\gamma}(\theta)- \sqrt{2(1+{\gamma}\cos 2\theta)}\big|\geq N^{-2/3+\delta/2} \Big)\leq N^{-D}
\end{align*}
Hence, by a simple continuity argument,  we have with very high probability
\begin{align*}
\max_{\theta\in [0,2\pi)}\lambda_1^{\gamma}(\theta)= \max_{\theta\in I_{0, \pi}}\lambda_1^{\gamma}(\theta), \qquad \min_{\theta\in [0,2\pi)}\lambda_1^{\gamma}(\theta)= \min_{\theta\in I_{\frac{\pi}{2}, \frac{3\pi}{2}}}\lambda_1^{\gamma}(\theta)
\end{align*}
where
\begin{align*}
I_{a, b}:=[a-N^{-1/6-\varepsilon}, a+N^{-1/6-\varepsilon}]\cup [b-N^{-1/6-\varepsilon}, b+N^{-1/6-\varepsilon}].
\end{align*}
Here we chose $0<\varepsilon=\varepsilon(\delta)<\delta/4$. 
We can then do a rescaling and use the correlation estimate similarly to (\ref{correlation-two-theta}) to conclude
\begin{align*}
\mathbb{P}\Big(\Big|\max_{\theta\in I_{0, \pi}}\sqrt{\frac{1+{\gamma}}{1+{\gamma}\cos 2\theta}}\lambda_1^{\gamma}(\theta)-\max_{\theta=0, \pi} \lambda_1^{\gamma}(\theta)\Big|\geq N^{-2/3-\tilde{\varepsilon}}\Big)\leq N^{-D}
\end{align*}
Consequently, we have 
\begin{align*}
\mathbb{P}\Big(\Big|\max_{\theta\in I_{0, \pi}}\lambda_1^{\gamma}(\theta)-\max_{\theta=0, \pi} \lambda_1^{\gamma}(\theta)\Big|\geq N^{-2/3-\tilde{\varepsilon}}\Big)\leq N^{-D}
\end{align*}
Similarly, we also have 
\begin{align*}
\mathbb{P}\Big(\Big|\min_{\theta\in I_{\frac{\pi}{2}, \frac{3\pi}{2}}}\lambda_1^{\gamma}(\theta)-\min_{\theta=\frac{\pi}{2}, \frac{3\pi}{2}} \lambda_1^{\gamma}(\theta)\Big|\geq N^{-2/3-\tilde{\varepsilon}}\Big)\leq N^{-D}
\end{align*}
From \cite{EYY12}, we know that $\lambda_1^{\gamma}(\theta)$ follows Tracy-Widom law with suitable parameter. It is thus enough to show the asymptotic independence of the fluctuations of $\lambda_1^{\gamma}(0)$ and $\lambda_1^{\gamma}(\pi)$, as well as the fluctuations  of $\lambda_1^{\gamma}(\frac{\pi}{2})$ and $\lambda_1^{\gamma}(\frac{3\pi}{2})$. Notice that $H^{\gamma}(0)=-H^{\gamma}(\pi)$. Hence, $\lambda_1^{\gamma}(0)$ and $-\lambda_1^{\gamma}(\pi)$ are the largest and smallest eigenvalues of $H^{\gamma}(0)$, respectively. The relation between $\lambda_1^{\gamma}(\frac{\pi}{2})$ and $\lambda_1^{\gamma}(\frac{3\pi}{2})$ is analogous. Hence, it boils down to show the asymptotic independence of the fluctuations of the  largest and smallest eigenvalues of a Wigner matrix. This can be obtained via a simple GFT with the results in \cite{Bor10} or \cite{Sa99}.  We omit the details. This concludes the proof of Theorem \ref{thm.elliptic}.

\end{proof}

\section{Further discussion} \label{s.discussion}

In this section, we comment on some future directions. 

\subsection{Further expansion of numerical radii} In Theorem \ref{maintheorem}, we obtained the first and second order terms of $r_{\pm}(A)$. A natural subsequent question concerns the next order terms, up to the fluctuation term. For the first two orders, our analysis relied on a correlation-decorrelation transition at the scale $N^{-1/6\pm \varepsilon}$. There remains an $N^{2\varepsilon}$ scale gap between the correlation and the decorrelation regimes. To obtain the next order term using the method in this work, this gap shall first be reduced to a much finer one, for instance, of logarithmic order. A correlation-decorrelation transition on a finer scale is not only necessary for a better understanding of $r_{\pm}(A)$ but is also an intrinsically interesting question in its own right. We will consider it in a future work. 

\subsection{Real non-Hermitian random matrix} In this work, we consider complex non-Hermitian random matrices. It is natural to ask whether similar results can be established for real matrices as well. Our analysis for the complex case relies on the Gaussian setting via the GFT method, under which the matrix $\mathcal{H}(\theta)$ is always a GUE matrix for any $\theta$. When we consider the real Ginibre matrix, the matrix $\mathcal{H}(\theta)$ undergoes a transition from GOE to GUE and further to antisymmetric GUE as $\theta$ increases from $0$ to $\frac{\pi}{2}$ and then to $\pi$, and then goes through the transition backwards when $\theta$ goes from $\pi$ to $2\pi$. The tail probability estimate in Proposition \ref{pro.tail} is not directly available for the interpolation between GOE, GUE, and antisymmetric GUE. However, we note that for $\theta\in [N^{-1/6+\varepsilon}, \pi-N^{-1/6+\varepsilon}]\cup [\pi+N^{-1/6+\varepsilon}, 2\pi-N^{-1/6+\varepsilon}]$, one can view the interpolations as a GOE or antisymmetric GUE perturbed by a certain amount of GUE. Then, by a DBM analysis, one can expect that in this regime, the tail probability estimate for the largest eigenvalue of $\mathcal{H}(\theta)$ should resemble that of the GUE. Nevertheless, such an argument still leaves the complementary regime unresolved. Apart from this issue, all other arguments in this work can be carried over to the real case straightforwardly.

\subsection{General matrix-valued process} In this work, our analysis essentially reduces to studying the extrema of the largest eigenvalue of a matrix-valued process defined in (\ref{091102}), which is a Gaussian process when $A$ is a complex Ginibre matrix. Another widely studied Gaussian process is the matrix-valued Brownian motion, whose eigenvalue process is the DBM, as we have mentioned and used in our derivations. Our argument can actually be extended to study the extrema of the largest eigenvalue of a general matrix-valued Gaussian process, and further, to processes involving generally distributed random matrices.
For instance, a natural model to consider is the matrix-valued random Fourier series, whose scalar version has been extensively studied. Specifically, for instance one can consider the following model:
\begin{align}
F(\theta)= \sum_{k=-\infty}^{\infty} a_k A_k e^{\ii k \theta}, \qquad a_kA_k=\overline{a_{-k}A_{-k}}
\end{align}
where $a_{k}$'s are deterministic coefficients that decay sufficiently fast and $A_{k}$'s are independent non-Hermitian random matrices. In our setting, we consider the extrema of the largest eigenvalue of such a process over a large time domain. This differs from results on the extrema of DBM on the time scale $t\sim N^{-1/3}$, which are well understood from the KPZ literature \cite{Joh03, CH14}. Note that on the time scale $t\sim N^{-1/3}$, where decorrelation has not yet occurred, the analysis is intrinsically different from the case of random variable sequence with short-range correlation.

\subsection{Transition regime for Elliptic model} In this work, we consider two matrix models: the standard complex non-Hermitian random matrix, for which $\lambda_1(\theta)$ is stationary, and the elliptic model with $\gamma\gg N^{-1/3}$, where the process $\lambda_1(\theta)$ is sufficiently non-stationary. In the latter case, the result resembles the numerical radius of a Hermitian matrix, which by definition is simply the operator norm. The stationary and non-stationary cases discussed in this paper can thus be regarded as two typical scenarios for the problem of the numerical radii of random matrices. It is nevertheless interesting to consider the transition regime $\gamma\sim N^{-1/3}$ for the elliptic model. In this regime, the decay of the first-order term of $\lambda_1(\theta)$ allows us to localize the discussion to a vicinity of size $N^{-1/6}$ around $0$ and $\pi$ for $r_{+}(A^{\gamma})$, but not to a smaller one. For such a vicinity, we can no longer use the correlation estimate to reduce the discussion to only the two points $\theta=0$ and $\pi$. The non-trivial correlation on the scale $N^{-1/6}$ will then play a role. This situation is very similar to that in \cite{CH14, Joh03}. From a mathematical perspective, it would be interesting to investigate the distribution of $r_{\pm}(A)$ in this case.

\section*{Acknowledgement}

Z. Bao would like to thank Kevin Schnelli, Dong Wang, and Lun Zhang for helpful discussions, and Shuyang Ling and Qiang Zeng for helpful communications.


\begin{thebibliography}{00}
 


\bibitem{AEKS20}
J.~Alt, L.~Erd\H{o}s, T.~Kr\"{u}ger, D.~Schr\"{o}der.
\newblock Correlated random matrices: band rigidity and edge universality.
\newblock {\em Ann. Probab.}, 48(2):963--1001, 2020.

\bibitem{AK22}
J.~Alt, T.~Kr\"{u}ger.
\newblock Local elliptic law. 
\newblock {\em Bernoulli.} 28(2):886--909, 2022.

\bibitem{AGZ10}
G. W.~Anderson, A.~Guionnet, O.~Zeitouni.
\newblock An introduction to random matrices.
\newblock Cambridge university press, 118, 2010.

\bibitem{ALP94}
O.~Axelsson, H.~Lu, B.~Polman.
\newblock On the numerical radius of matrices and its application to iterative solution methods.
\newblock {\em Linear and Multilinear Algebra}, 37(1-3):225--238, 1994.

\bibitem{Bai97}
Z.~D.~Bai.
\newblock Circular law.
\newblock {\em Ann. Probab.}, 25(1):494--529, 1997.

\bibitem{BSY88}
Z.~D.~Bai, J.~W.~Silverstein, Y.~Q.~Yin.
\newblock A note on the largest eigenvalue of a large-dimensional sample covariance matrix.
\newblock {\em J. Multivariate Anal.}, 26(2):166--168, 1988.

\bibitem{BY86}
Z.~D.~Bai, Y.~Q.~Yin.
\newblock Limiting behavior of the norm of products of random matrices and two problems of Geman-Hwang.
\newblock {\em Probab. Theory Related Fields}, 73(4):555--569, 1986.

\bibitem{BY93}
Z.~D.~Bai, Y.~Q.~Yin.
\newblock Limit of the smallest eigenvalue of a large-dimensional sample covariance matrix.
\newblock {\em Ann. Probab.}, 21(3):1275--1294, 1993.

\bibitem{BCEHK25a}
Z.~Bao, G.~Cipolloni, L.~Erd\H{o}s, J.~Henheik, O.~Kolupaiev.
\newblock Decorrelation transition in the Wigner minor process. 
\newblock arXiv:2503.06549. 2025. {\em Accepted to Probab. Theory Related Fields}.

\bibitem{BCEHK25b}
Z.~Bao, G.~Cipolloni, L.~Erd\H{o}s, J.~Henheik, O.~Kolupaiev.
\newblock Law of fractional logarithm for random matrices.
\newblock arXiv:2503.18922, 2025.

\bibitem{BBBK}
J.~Baslingker, R.~Basu, S.~Bhattacharjee, M.~Krishnapur.
\newblock Optimal tail estimates in $\beta $-ensembles and applications to last passage percolation.
\newblock arXiv:2405.12215, 2024.

\bibitem{BBBK2}
J.~Baslingker, R.~Basu, S.~Bhattacharjee, M.~Krishnapur.
\newblock The Paquette-Zeitouni law of fractional logarithms for the GUE minor process and the Plancherel growth process. \newblock arXiv:2410.11836. 2024.

\bibitem{BC12}
C.~Bordenave, D.~Chafai.
\newblock Around the circular law.
\newblock {\em Probab. Surveys}, 9:1--89, 2012.

\bibitem{BCG22}
C.~Bordenave, D.~Chafai, D.~Garc\'{i}a-Zelada.
\newblock Convergence of the spectral radius of a random matrix through its characteristic polynomial.
\newblock {\em Probab. Theory Related Fields}, 182(3):1163--1181, 2022.

\bibitem{Bor10}
F.~Bornemann. 
\newblock Asymptotic independence of the extreme eigenvalues of Gaussian unitary ensemble. 
\newblock Journal of Mathematical Physics. 2010 Feb 1;51(2).

\bibitem{B21}
P. Bourgade.
\newblock Extreme gaps between eigenvalues of Wigner matrices.
\newblock {\em J. Eur. Math. Soc.}, 24(8): 2823--2873, 2021.

\bibitem{BF22}
S.~S.~Byun, P.~J.~Forrester. 
\newblock Progress on the study of the Ginibre ensembles I: GinUE. 
\newblock {\em arXiv:2211.16223}, 2022.

\bibitem{CGT25}
T.~Chen, A.~Greenbaum, T.~Trogdon.
\newblock GMRES, pseudospectra, and Crouzeix's conjecture for shifted and scaled Ginibre matrices.
\newblock {\em Math. Comp.}, 94(351):241--261, 2025.

\bibitem{CH99}
S.-H.~Cheng, N.~J.~Higham.
\newblock The nearest definite pair for the Hermitian generalized eigenvalue problem.
\newblock {\em Linear Algebra Appl.}, 302-303:63--76, 1999.

\bibitem{CES21}
G.~Cipolloni, L.~Erd\H{o}s, D.~Schr\"oder.
\newblock Eigenstate thermalization hypothesis for Wigner matrices.
\newblock {\em Comm. Math. Phys.}, 388(2): 1005--1048, 2021.

\bibitem{CES24}
G.~Cipolloni, L.~Erd\H{o}s, D.~Schr\"oder.
\newblock Mesoscopic central limit theorem for non-Hermitian random matrices
\newblock {\em Probab. Theory Related Fields}, 188(3): 1131--1182, 2024.

\bibitem{CEX23}
G.~Cipolloni, L.~Erd\H{o}s, Y.~Xu.
\newblock Universality of extremal eigenvalues of large random matrices.
\newblock arXiv:2312.08325, 2023.

\bibitem{CEH23}
G.~Cipolloni, L.~Erd\H{o}s, J.~Henheik.
\newblock Eigenstate thermalisation at the edge for Wigner matrices.
\newblock arXiv:2309.05488, 2023. Accepted to {\em Ann. inst. Henri Poincar\'e Probab. Stat}.

\bibitem{CGLZ14}
B.~Collins, P.~Gawron, A.~E.~Litvak, K.~Zyczkowski.
\newblock Numerical range for random matrices.
\newblock {\em J. Math. Anal. Appl.}, 418(1):516--533, 2014.

\bibitem{CH14}
I.~Corwin,  A.~Hammond. 
\newblock Brownian Gibbs property for Airy line ensembles. 
\newblock {\em Invent. Math.}, 195(2):441–508, 2014.

\bibitem{D62}
F. J.~Dyson.
\newblock A Brownian-motion model for the eigenvalues of a random matrix.
\newblock {\em J. Math. Phys.}, 3(6): 1191--1198, 1962.

\bibitem{E93}
M.~Eiermann.
\newblock Fields of values and iterative methods.
\newblock {\em Linear Algebra Appl.}, 180:167--197, 1993.

\bibitem{EKYY13}
L.~Erd\H{o}s, A.~Knowles, H.-T.~Yau, J.~Yin.
\newblock The local semicircle law for a general class of random matrices.
\newblock {\em Electron. J. Probab.}, 18(59):1--58, 2013.



\bibitem{EX23}
L.~Erd\H{o}s, Y.~Xu.
\newblock Small deviation estimates for the largest eigenvalue of Wigner matrices.
\newblock {\em Bernoulli}, 29(2):1063--1079, 2023.

\bibitem{EYY12}
L.~Erd\H{o}s, H.-T.~Yau, J.~Yin.
\newblock Rigidity of eigenvalues of generalized {Wigner} matrices.
\newblock {\em Adv. Math.}, 229(3):1435--1515, 2012.

\bibitem{Fo93}
P.~J.~Forrester.
\newblock The spectrum edge of random matrix ensembles.
\newblock {\em Nuclear Phys. B}, 402(3):709--728, 1993.

\bibitem{Girko84}
V.~L.~Girko.
\newblock The circular law.
\newblock {\em Teor. Veroyatnost. i Primenen.}, 29(4):669--679, 1984.

\bibitem{Girko85}
V.~L.~Girko.
\newblock An elliptic law.
\newblock {\em Dokl. Akad. Nauk Ukrain. SSR Ser. A}, (1):56--59, 1985.

\bibitem{GT82}
M.~Goldberg, E.~Tadmor.
\newblock On the numerical radius and its applications.
\newblock {\em Linear Algebra Appl.}, 42:263--284, 1982.

\bibitem{GT10}
F.~G\"{o}tze, A.~Tikhomirov.
\newblock The circular law for random matrices.
\newblock {\em Ann. Probab.}, 38(4):1444--1491, 2010.

\bibitem{GR97}
K.~E.~Gustafson, D.~K.~Rao.
\newblock Numerical range.
\newblock In {\em Numerical Range: The Field of Values of Linear Operators and Matrices}, pages 1--26. Springer, 1997.

\bibitem{H94}
W.~Hackbusch.
\newblock {\em Iterative Solution of Large Sparse Systems of Equations}.
\newblock Springer, 1994.

\bibitem{HK17}
Y.~He, A.~Knowles.
\newblock Mesoscopic eigenvalue statistics of Wigner matrices.
\newblock {\em Ann. Appl. Probab.}, 27: 1510--1550, 2017.

\bibitem{HTV02}
N.~J.~Higham, F.~Tisseur, P.~M.~Van~Dooren.
\newblock Detecting a definite Hermitian pair and a hyperbolic or elliptic quadratic eigenvalue problem, and associated nearness problems.
\newblock {\em Linear Algebra Appl.}, 351:455--474, 2002.

\bibitem{HJ94}
R.~A.~Horn, C.~R.~Johnson.
\newblock {\em Topics in Matrix Analysis}.
\newblock Cambridge University Press, 1994.

\bibitem{Joh00}
K.~Johansson.
\newblock Shape fluctuations and random matrices.
\newblock {\em Comm. Math. Phys.}, 209(2):437--476, 2000.

\bibitem{Joh03}
K.~Johansson. 
\newblock Discrete polynuclear growth and determinantal processes. 
\newblock {\em Comm. Math. Phys.}, 242(1):277--329, 2003.

\bibitem{KKP96}
A. M.~Khorunzhy, B. A. Khoruzhenko, L. A. Pastur.
\newblock  Asymptotic properties of large random matrices with independent entries.
\newblock {\em J. Math. Phys.},  37: 5033--5060, 1996.

\bibitem{K25}
O.~Kolupaiev.
\newblock Spectral independence of almost fully correlated random matrices.
\newblock arXiv:2503.21472, 2025.

\bibitem{LW64}
P.~D.~Lax, B.~Wendroff.
\newblock Difference schemes for hyperbolic equations with high order of accuracy.
\newblock {\em Comm. Pure Appl. Math.}, 17(3):381--398, 1964.

\bibitem{Na13}
A.~Naumov.
\newblock The elliptic law for random matrices.
\newblock {\em Vestnik Moskov. Univ. Ser. XV Vychisl. Mat. Kibernet.}, (1):31--38, 48, 2013.

\bibitem{NR15}
H.~H.~Nguyen, S.~O'Rourke.
\newblock The elliptic law.
\newblock {\em Int. Math. Res. Not. IMRN}, (17):7620--7689, 2015.

\bibitem{Rider03}
B.~Rider.
\newblock A limit theorem at the edge of a non-Hermitian random matrix ensemble.
\newblock {\em J. Phys. A}, 36(12):3401--3409, 2003.

\bibitem{PT02}
P.~J.~Psarrakos, M.~J.~Tsatsomeros.
\newblock Numerical range: (in) a matrix nutshell.
\newblock Department of Mathematics, Washington State University, 2002.

\bibitem{PZ10}
G.~Pan, W.~Zhou.
\newblock Circular law, extreme singular values and potential theory.
\newblock {\em J. Multivariate Anal.}, 101(3):645--656, 2010.

\bibitem{PZ17}
E.~Paquette, O.~Zeitouni.
\newblock Extremal eigenvalue correlations in the GUE minor process and a law of fractional logarithm.
\newblock {\em Ann. Probab.}, 45(6A):4112--4166, 2017.

\bibitem{Peche}
S.~P\'{e}ch\'{e}.
\newblock Universality results for the largest eigenvalues of some sample covariance matrix ensembles.
\newblock {\em Probab. Theory Related Fields}, 143(3):481--516, 2009.

\bibitem{PY}
N.~S.~Pillai, J.~Yin.
\newblock Universality of covariance matrices.
\newblock {\em Ann. Appl. Probab.}, 24(3):935--1001, 2014.

\bibitem{Sa99}
A.~Soshnikov.
\newblock Universality at the edge of the spectrum in {Wigner} random matrices.
\newblock {\em Comm. Math. Phys.}, 207(3):697--733, 1999.

\bibitem{Sosh02}
A.~Soshnikov.
\newblock A note on universality of the distribution of the largest eigenvalues in certain sample covariance matrices.
\newblock {\em J. Stat. Phys.}, 108(5):1033--1056, 2002.

\bibitem{T25}
E.~Tadmor.
\newblock On the stability of Runge–Kutta methods for arbitrarily large systems of ODEs.
\newblock {\em Comm. Pure Appl. Math.}, 78(4):821--855, 2025.

\bibitem{TV10}
T.~Tao, V.~Vu.
\newblock Random matrices: Universality of local eigenvalue statistics up to the edge.
\newblock {\em Comm. Math. Phys.}, 298(2):549--572, 2010.

\bibitem{TV10b}
T.~Tao, V.~Vu.
\newblock Random matrices: universality of ESDs and the circular law.
\newblock {\em Ann. Probab.}, 38(5):2023--2065, 2010.

\bibitem{TW94}
C.~Tracy, H.~Widom.
\newblock Level-spacing distributions and the Airy kernel.
\newblock {\em Comm. Math. Phys.}, 159:151--174, 1994.

\end{thebibliography}
\end{document}